\documentclass[global,final]{svjour}
\date{29 February, 2008}
\usepackage{amsmath,amsfonts,amssymb}
\usepackage[dvips]{graphicx}
\usepackage[cmtip,all]{xy}
\usepackage[numbers,square,sort&compress]{natbib} 
\usepackage{xspace}
\graphicspath{{./}{./figs/}{../../figs/}}
\setlength{\topmargin}{0.375in}
\voffset0.0in
\hoffset1in
\def\F{\mathbb{F}}
\def\N{\mathbb{N}}
\def\Z{\mathbb{Z}}
\def\Fy{\mathfrak{F}_Y}
\def\SYmod{S_Y\!/\!\!\sim_Y}
\def\simkappa{\!\!\sim_{\kappa}}
\def\tsimkappa{\sim_{\kappa}}

\def\tsimdelta{\sim_{\delta}}
\def\sds{{\sf SDS}\xspace}
\def\sdss{{\sf SDS}s\xspace}
\def\Acyc{\mathsf{Acyc}}
\def\acyc{\alpha}
\def\Shift{\ensuremath{\text{\boldmath{$\sigma$}}}}
\def\rev{\ensuremath{\text{\boldmath{$\rho$}}}}
\def\<{\langle}
\def\>{\rangle}
\def\card#1{|#1|}

\def\vset{\mathrm{v}}
\def\eset{\mathrm{e}}
\def\eg{e.g.\xspace}
\def\ie{i.e.\xspace}
\def\nkeq{\nsim_\kappa}
\def\varepsilon{\eta_e}
\def\ieext{\mathcal{I}^*_e}
\begin{document}
\title{Equivalences on Acyclic Orientations}
\author{
Matthew~Macauley\inst{1,3} \and
Henning~S.~Mortveit\inst{2,3}}
\institute{
Department of Mathematics, UCSB \and 
Department of Mathematics, Virginia Tech \and 
NDSSL, VBI, Virginia Tech, \email{\{macauley,henning.mortveit\}@vt.edu} 
\thanks{This work was partially supported by Fields Institute in
  Toronto, Canada. 2000 Mathematics Subject Classification:
  06A06;05A99;05C20;20F55} 
}

\maketitle

\keywords{
Acyclic orientations, partially ordered sets, equivalence, permutations,
shift, reflection, Tutte polynomial.
}

\medskip

\abstract{
  The cyclic and dihedral groups can be made to act on the set
  $\Acyc(Y)$ of acyclic orientations of an undirected graph $Y$, and
  this gives rise to the equivalence relations $\tsimkappa$ and
  $\tsimdelta$, respectively. These two actions and their
  corresponding equivalence classes are closely related to
  combinatorial problems arising in the context of Coxeter groups,
  sequential dynamical systems, the chip-firing game, and
  representations of quivers.

  In this paper we construct the graphs $C(Y)$ and $D(Y)$ with vertex
  sets $\Acyc(Y)$ and whose connected components encode the
  equivalence classes. The number of connected components of these
  graphs are denoted $\kappa(Y)$ and $\delta(Y)$, respectively. We
  characterize the structure of $C(Y)$ and $D(Y)$, show how
  $\delta(Y)$ can be derived from $\kappa(Y)$, and give enumeration
  results for $\kappa(Y)$. Moreover, we show how to associate a poset
  structure to each $\kappa$-equivalence class, and we characterize
  these posets. This allows us to create a bijection from
  $\Acyc(Y)/\simkappa$ to $\Acyc(Y')/\simkappa \cup\,\,
  \Acyc(Y'')/\simkappa$, where $Y'$ and $Y''$ denote edge deletion
  and edge contraction for a cycle-edge in $Y$, respectively, which in
  turn shows that $\kappa(Y)$ may be obtained by an evaluation of the
  Tutte polynomial at $(1,0)$.
}


\section{Introduction}
\label{sec:intro}
An acyclic orientation $O_Y$ of an undirected graph $Y$ induces a
partial ordering on the vertex set $\vset[Y]$ by $i\leq_{O_Y} j$ if
there is a directed path from $i$ to $j$ in $O_Y$.
A cyclic $1$-shift (left) of a linear extension of $O_Y$ corresponds
to converting a source of $O_Y$ into a sink. This source-to-sink
operation on $\Acyc(Y)$ gives rise to the equivalence relation
denoted $\tsimkappa$. Reversing a linear extension of $O_Y$
corresponds to reflecting all edge orientations in $O_Y$. The coarser
equivalence relation on $\Acyc(Y)$ obtained through source-to-sink
operations and reflections is denoted $\tsimdelta$.

This paper is organized as follows. In Section~\ref{sec:equivalence}
we construct the equivalence relations $\tsimkappa$ and $\tsimdelta$,
and graphs $C(Y)$ and $D(Y)$ that have vertex set $\Acyc(Y)$, and
whose connected components corresponds to $\kappa$- and
$\delta$-equivalence classes, respectively. We let $\kappa(Y)$ and
$\delta(Y)$ denote the number of equivalence classes. In
Section~\ref{sec:structure} we study the structure and properties of
the graphs $C(Y)$ and $D(Y)$, show how $\delta(Y)$ can be determined
from $\kappa(Y)$, and give bounds for these quantities. In
Section~\ref{sec:posets} we show that one may associate a poset to each
$\kappa$-equivalence class and use this to establish the bijection
\begin{equation}
  \label{eq:bijection1}
  \Theta\colon\Acyc(Y)/\simkappa\,\longrightarrow
  \big(\Acyc(Y'_e)/\simkappa\!\big)\,\bigcup\,
  \big(\Acyc(Y''_e)/\simkappa\!\big)\;,
\end{equation}
where $Y'_e$ and $Y''_e$ are the graphs formed by deleting and
contracting a non-bridge edge $e$ of $Y$, respectively. This leads to
a new proof of a recursion relation for $\kappa(Y)$
in~\cite{Macauley:08b}. Finally, in the summary, we discuss how these
constructions arise in other areas of mathematics, such as sequential
dynamical systems, Coxeter groups, the chip-firing game, and the
representation theory of quivers.


\section{Terminology and Background}
\label{sec:prelim}

Let $Y$ be an undirected, simple and loop-free graph with vertex set
$\vset[Y]=\{1,2,\dots,n\}$ and edge set $\eset[Y]$. We let $S_Y$
denote the set of total orders (\ie, permutations) of
$\vset[Y]$. In~\cite{Reidys:98a} an equivalence relation $\sim_Y$ is
introduced on $S_Y$ through the \emph{update graph} $U(Y)$ of $Y$.
The update graph has vertex set $S_Y$, and two distinct vertices $\pi
= (\pi_i)_i$ and $\pi' = (\pi'_i)_i$ are adjacent if they differ in
exactly two consecutive elements $\pi_k$ and $\pi_{k+1}$ such that
$\{\pi_k, \pi_{k+1}\} \not\in\eset[Y]$. The equivalence relation
$\sim_Y$ is defined by $\pi \sim_Y \pi'$ if $\pi$ and $\pi'$ are
connected in $U(Y)$. We denote the equivalence class containing $\pi$
by $[\pi]_Y$, and set
\begin{equation*}
  \SYmod\, = \big\{[\pi]_Y\mid\pi\in S_Y\big\}\;.
\end{equation*}
This corresponds to partially commutative monoids as defined
in~\cite{Cartier:69}, but restricted to fixed length permutations over
$\vset[Y]$ and with commutation relations encoded by non-adjacency in
the graph $Y$.

Orientations of $Y$ are represented as maps $O_Y\colon\eset[Y]
\longrightarrow \vset[Y] \times \vset[Y]$, which may also be viewed as
directed graphs. The set of acyclic orientations of $Y$ is denoted
$\Acyc(Y)$, and we set $\acyc(Y) = \card{\Acyc(Y)}$. 
Every acyclic orientation defines a partial ordering on $\vset[Y]$
where the covering relations are $i\leq_{O_Y}\!j$ if $\{i,j\}\in
\eset[Y]$ and $O_Y(\{i,j\}) = (i,j)$.
The set of linear extensions of $O_Y$ contains precisely the
permutations $\pi\in S_Y$ such that if $i\leq_{O_Y} j$, then $i$
precedes $j$ in $\pi$. Through the ordering of $\vset[Y]$, every
permutation $\pi\in S_Y$ induces a canonical linear order on
$\vset[Y]$, see~\cite{Reidys:98a}. Moreover, each permutation $\pi\in S_Y$
induces an acyclic orientation $O_Y^\pi\in\Acyc(Y)$ defined by
$O_Y^{\pi}(\{i,j\}) = (i,j)$ if $i$ precedes $j$ in $\pi$ and
$O_Y^{\pi}(\{i,j\}) = (j,i)$ otherwise.
The bijection
\begin{equation}
  \label{eq:bij1}
  f_Y\colon\SYmod\,\longrightarrow \Acyc(Y)\;,\qquad 
     f_Y([\pi]_Y) = O_Y^\pi\;,
\end{equation}
from~\cite{Reidys:98a} allows us to identify equivalence classes 
and acyclic orientations. The number of equivalence classes under
$\sim_Y$ is therefore given by $\acyc(Y)$. 

For $O_Y\in\Acyc(Y)$ and $e = \{v,w\}\in\eset[Y]$ let $O^{\rho(e)}_Y$
be the orientation of $Y$ obtained from $O_Y$ by reversing the
edge-orientation of $e$. Let $Y'_e$ and $Y''_e$ denote the graphs
obtained from $Y$ by deletion and contraction of $e$, respectively,
and let $O_{Y'}$ and $O_{Y''}$ denote the induced orientations of $O_Y$
under these operations. The bijection
\begin{equation}
  \label{eq:bijection2}
  \beta_e \colon \Acyc(Y)\longrightarrow\Acyc(Y_e')\cup\Acyc(Y''_e) 
\end{equation}
defined by
\begin{eqnarray*}
  O_Y&\longmapsto&
  \begin{cases}
    O'_Y\;,  & O^{\rho(e)}_Y\not\in\Acyc(Y),  \\ 
    O'_Y\;,  & O^{\rho(e)}_Y\in\Acyc(Y)\mbox{ and } O_Y(e)=(v,w) \;,  \\  
    O''_Y\;, & O^{\rho(e)}_Y\in\Acyc(Y)\mbox{ and } O_Y(e)=(w,v) \;.  \\  
  \end{cases}
\end{eqnarray*}
is well-known, and shows that one may compute $\alpha(Y)$ through the
recursion relation
\begin{equation*}
  \label{eq:alpha}
  \acyc(Y)=\acyc(Y'_e)+\acyc(Y''_e) \;,
\end{equation*}
valid for any $e\in\eset[Y]$. 


\section{Graph Constructions for Equivalence Relations}
\label{sec:equivalence}


\subsection{Relations on $\SYmod$}

Using cycle notation, let $\sigma,\rho\in S_n$ be the elements
\begin{equation*}
  \sigma = (n, n-1,\ldots,2,1)\;, \qquad 
  \rho = (1,n)(2,n-1)\cdots (\lceil \tfrac{n}{2}\rceil, 
  \lfloor \tfrac{n}{2} \rfloor +1)\;,
\end{equation*}
and let $C_n$ and $D_n$ be the subgroups
\begin{equation}
  C_n = \<\sigma\> \text{\quad and\quad }
  D_n = \<\sigma,\rho\> \;.
\end{equation}
Both $C_n$ and $D_n$ act on $S_Y$ via $g\cdot(\pi_1,\ldots,\pi_n) =
(\pi_{g^{-1}(1)},\ldots,\pi_{g^{-1}(n)})$.  Define $\Shift_s(\pi) =
\sigma^s\cdot\pi$, so that, \eg $\Shift_1(\pi) = \sigma\cdot\pi =
(\pi_2,\pi_3,\ldots,\pi_n,\pi_1)$, and define $\rev(\pi) =
\rho\cdot\pi=(\pi_n,\pi_{n-1},\ldots,\pi_2,\pi_1)$. We construct two
undirected graphs $C(Y)$ and $D(Y)$ whose vertex sets are $\SYmod$,
and edge sets are
\begin{eqnarray*}
\eset[C(Y)]&=&\bigl\{\{[\pi]_Y,[\Shift_1(\pi)]_Y\} \mid \pi\in
  S_Y\bigr\}\;,\\ 
  \eset[D(Y)]&=&  \bigl\{\{[\pi]_Y,\,[\rev(\pi)]_Y\}\mid \pi\in S_Y\bigr\}
  \cup\eset[C(Y)] \;.
\end{eqnarray*}
Define $\kappa(Y)$ and $\delta(Y)$ to be the number of connected
components of $C(Y)$ and $D(Y)$, respectively. By construction, $C(Y)$
is a subgraph of $D(Y)$ and $\delta(Y) \leq \kappa(Y)$.
\begin{example}
  \label{ex:K23}
  Let $Y$ be the complete bipartite graph $K_{2,3}$, where the
  partition of the vertex set is $\{\{1,3,5\},\{2,4\}\}$. The graph
  $U(K_{2,3})$ is shown in Figure~\ref{fig:K23} with vertex labels
  omitted.  By simply counting the components we see that
  $\alpha(K_{2,3}) = 46$.
  \begin{figure}[ht]
    \centerline{\includegraphics[width=0.95\textwidth]{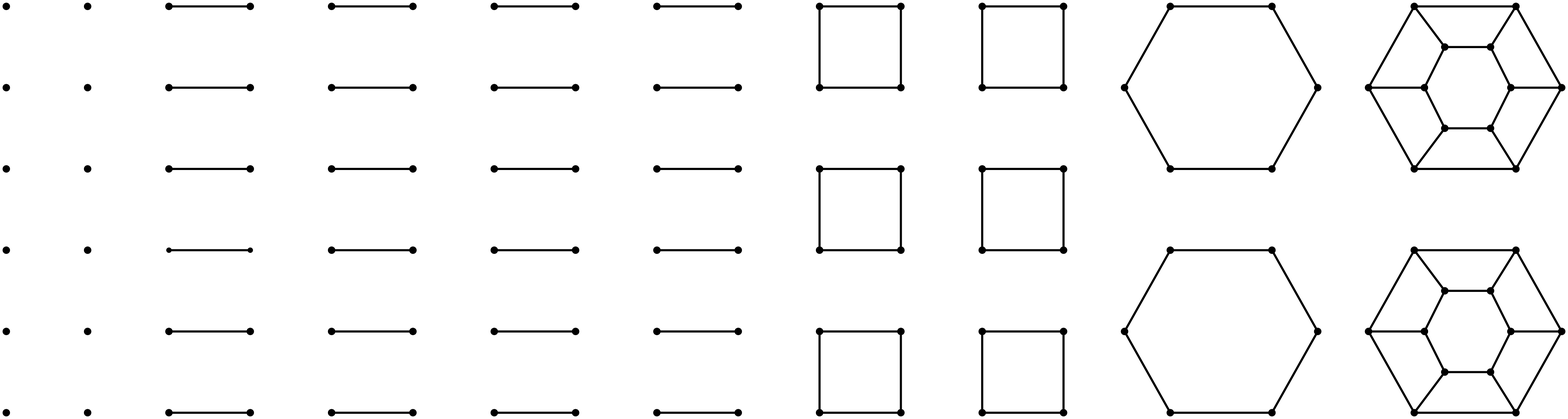}}
    \caption{The update graph $U(K_{2,3})$.}
    \label{fig:K23} 
  \end{figure}
  We can better understand the component structure of $U(K_{2,3})$ by
  mapping permutations as $(\pi_i)_i \stackrel{\phi}{\mapsto} (\pi_i
  \mod 2)_i$. Non-adjacency in $Y$ coincides with parity, that is, if
  $\pi\sim_Y\sigma$, then $\phi(\pi) = \phi(\sigma)$. Through the map
  $\phi$ we see that the $12$ singleton points in $U(K_{2,3})$ are
  precisely those with image $10101$. Each of the $24$ size-two
  components correspond to a pair of permutations with $\phi$-image of
  the form $01011$, $11010$, $01101$, or $10110$. The six
  square-components arise from the permutations with $\phi$-image
  $10011$ and $11001$. Finally, the permutations in the two
  hexagon-components are of the form $01110$, and those in the two
  largest components have $\phi$-image of the form $11100$ or $00111$.
  
  The graphs $C(K_{2,3})$ and $D(K_{2,3})$ are shown in
  Figure~\ref{fig:C_K23}.  The dashed lines are edges that belong to
  $D(K_{2,3})$ but not to $C(K_{2,3})$.
  \begin{figure}[!htbp]
    \centerline{\includegraphics[width=.95\textwidth]{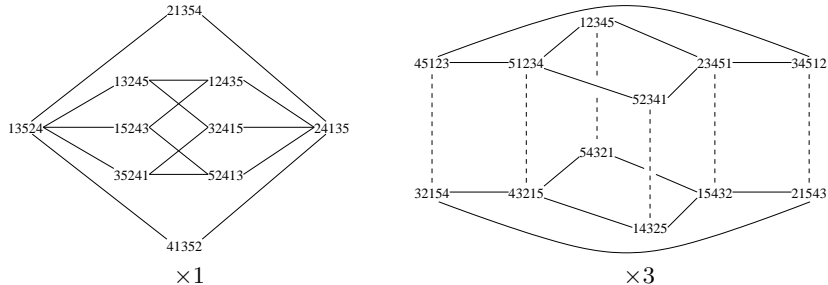}}
    \centerline{\hfill  $\times 1$ \hskip0.475\textwidth $\times 3$ \hfill}
    \caption{The graph $C(K_{2,3})$ contains the component on the left,
      and three isomorphic copies of the structure on the right (but
      with different vertex labels). The dashed lines are edges in
      $D(K_{2,3})$ but not in $C(K_{2,3})$.}
    \label{fig:C_K23}
  \end{figure}
  The vertices in Figure~\ref{fig:C_K23} are labeled by a permutation
  in the corresponding equivalence class in $\SYmod$. There are three
  isomorphic copies of the component on the right, but only one is
  shown. Each of these three components contains permutations whose
  $\phi$-image is in $\{01101, 11010, 10101, 01011, 10110\}$. The
  component on the left contains all of the remaining permutations,
  i.e., all $\pi$ for which $\phi(\pi) \in \{11100, 11001, 10011,
  00111, 01110\}$. Clearly, $\kappa(K_{2,3})=7$ and
  $\delta(K_{2,3})=4$.
\end{example}

Permutations from $\sim_Y$ classes belonging to the same component in
$C(Y)$ are called \emph{$\kappa$-equivalent} permutations, as are the
corresponding acyclic orientations.  For two $\kappa$-equivalent
permutations $\pi$ and $\pi'$ there is a sequence of adjacent non-edge
transpositions and cyclic shifts that map $\pi$ to $\pi'$. This is
simply a consequence of the definitions of $\SYmod$ and $C(Y)$.
Similarly, two permutations belonging to $\sim_Y$ classes on the same
connected component in $D(Y)$ are called \emph{$\delta$-equivalent},
as are their corresponding acyclic orientations.


\subsection{Relations on $\Acyc(Y)$}
\label{subsec:acyclic}

The bijection between $\SYmod$ and $\Acyc(Y)$ in~\eqref{eq:bij1}
allows us to identify $[\pi]_Y$ with the acyclic orientation
$O_Y^\pi$. It is clear that mapping $\pi\in[\pi']_Y$ to
$\Shift_1(\pi)$ corresponds precisely to converting the vertex $\pi_1$
from a source to a sink in $O_Y^\pi$.  This can be
extended. Following~\cite{Shi:01} we call the conversion of a source
vertex to a sink vertex in $O_Y\in\Acyc(Y)$ a \emph{source-to-sink
operation}, or a \emph{click}. Two orientations $O_Y,O_Y'\in \Acyc(Y)$
where $O_Y$ can be transformed into $O'_Y$ by a sequence of clicks are
said to be click-related, and we write this as $\mathbf{c}(O_Y) =
O'_Y$ where $\mathbf{c}=c_1c_2\cdots c_k$ with $c_i\in\vset[Y]$. It is
straightforward to verify that this click-relation is an equivalence
relation on $\Acyc(Y)$.  Through the bijection in~\eqref{eq:bij1} it
is clear that a source-to-sink operation precisely encodes adjacency
in the graph $C(Y)$, and the number of click-equivalence classes in
$\Acyc(Y)$ therefore equals $\kappa(Y)$.
The second equivalence relation on $\Acyc(Y)$ arises in the same
manner by additionally identifying $O_Y^{\pi}$ and the reverse
orientation $O_Y^{\rev(\pi)}$, the unique orientation that satisfies
$O_Y^{\pi}(\{i,j\})\neq O_Y^{\rev(\pi)}(\{i,j\})$ for every
$\{i,j\}\in\eset[Y]$.


\section{Structure of $C(Y)$ and $D(Y)$}
\label{sec:structure}

The following result gives insight into the component structure of the
graph $C(Y)$.

\begin{proposition}
  \label{prop:C_n}
  Let $Y$ be a connected graph on $n$ vertices and let $g,g'\in C_n$
  with $g\ne g'$. Then $[g\cdot\pi]_Y \ne [g'\cdot\pi]_Y$.
\end{proposition}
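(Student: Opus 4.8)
The plan is to transport the statement to acyclic orientations through the bijection $f_Y$ of~\eqref{eq:bij1}. Writing $g=\sigma^s$ and $g'=\sigma^{s'}$ with $0\le s<s'\le n-1$ (we may assume $s<s'$ by symmetry), the elements $g\cdot\pi=\Shift_s(\pi)$ and $g'\cdot\pi=\Shift_{s'}(\pi)$ lie in distinct $\sim_Y$-classes if and only if $O_Y^{\Shift_s(\pi)}\ne O_Y^{\Shift_{s'}(\pi)}$, since $f_Y$ is a bijection. So it suffices to show that the $n$ orientations $O_Y^{\Shift_0(\pi)},\dots,O_Y^{\Shift_{n-1}(\pi)}$ are pairwise distinct. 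Recall from Section~\ref{subsec:acyclic} that passing from $O_Y^{\Shift_t(\pi)}$ to $O_Y^{\Shift_{t+1}(\pi)}$ is exactly the click that turns the source $\pi_{t+1}$ into a sink; hence $O_Y^{\Shift_{s'}(\pi)}$ is obtained from $O_Y^{\Shift_s(\pi)}$ by the block of clicks $\pi_{s+1},\pi_{s+2},\dots,\pi_{s'}$.

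The key observation is that clicking a source $v$ reverses precisely the edges incident to $v$: before the click every edge at $v$ points away from $v$, and afterwards every such edge points into $v$. I would use this to compute the \emph{net} effect of the block above. Set $A=\{\pi_{s+1},\dots,\pi_{s'}\}$ and $B=\vset[Y]\setminus A$. Because $s'-s<n$, the clicked vertices are pairwise distinct, so over the whole block each vertex of $A$ is clicked exactly once, and an edge gets reversed once for each of its endpoints lying in $A$. Consequently an edge with both endpoints in $A$, or both in $B$, is reversed an even number of times and returns to its original orientation, whereas an edge with exactly one endpoint in $A$ is reversed exactly once. In other words, $O_Y^{\Shift_s(\pi)}$ and $O_Y^{\Shift_{s'}(\pi)}$ differ precisely on the edges of the cut between $A$ and $B$.

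It then remains to produce at least one such cut edge, and here connectivity of $Y$ enters decisively: since $0\le s<s'\le n-1$, the set $A$ is a nonempty \emph{proper} subset of $\vset[Y]$, so $B$ is nonempty too, and a connected $Y$ must contain an edge with one endpoint in $A$ and the other in $B$. That edge is reversed, so $O_Y^{\Shift_s(\pi)}\ne O_Y^{\Shift_{s'}(\pi)}$, which completes the argument. I expect the only delicate point to be the bookkeeping in the second paragraph, namely that each clicked vertex is a genuine source at the moment it is clicked and that the parity count of reversals is correct. A clean way to make this rigorous, sidestepping the step-by-step validity of the clicks, is to verify directly from $\Shift_s(\pi)=(\pi_{s+1},\dots,\pi_n,\pi_1,\dots,\pi_s)$ that an edge $\{\pi_a,\pi_b\}$ with $a<b$ is reversed relative to $O_Y^\pi$ in $O_Y^{\Shift_s(\pi)}$ exactly when $a\le s<b$; the cut description of the previous paragraph follows immediately from this formula.
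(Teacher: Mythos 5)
Your argument is correct and takes essentially the same route as the paper: your set $A=\{\pi_{s+1},\dots,\pi_{s'}\}$ is precisely the block $V'$ the paper isolates, and both proofs reduce to the observation that the two shifted orderings induce opposite orientations on every edge of the cut between this block and its complement, so connectedness of $Y$ supplies a distinguishing edge. Your click-count (or the direct criterion that $\{\pi_a,\pi_b\}$ with $a<b$ flips exactly when $a\le s<b$) just makes explicit the bookkeeping the paper leaves implicit.
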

\begin{proof}
  Assume $g\neq g'$ with $[g \cdot\pi]_Y=[g'\cdot\pi]_Y$.  By
  construction, we have $g \cdot\pi=\Shift_s(\pi)$ and $g'\cdot\pi =
  \Shift_{s'}(\pi)$.  Without loss of generality we may
  assume~$s'<s$. Let $V'\subset V=\vset[Y]$ be the initial subsequence
  of vertices in $\Shift_{s'}(\pi)$ that occurs at the end in
  $\Shift_s(\pi)$. If any of the vertices in $V'$ are adjacent to any
  of the vertices in $V \setminus V'$ in $Y$ it would imply that
  $[\Shift_s(\pi)]_Y \neq [\Shift_{s'}(\pi)]_Y$. The only possibility
  is that $Y$ is not connected, but this contradicts the assumptions
  of the proposition.  \qed
\end{proof}

There is a similar result to Proposition~\ref{prop:C_n} for $D_n$,
albeit somewhat more restrictive.
\begin{proposition}
  \label{prop:D_n}
  Let $Y$ be a connected graph on $n$ vertices and let $g,g'\in D_n$
  with $g \ne g'$. If $[g\cdot\pi]_Y=[g'\cdot\pi]_Y$ holds then $Y$
  must be bipartite.
\end{proposition}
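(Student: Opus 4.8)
The plan is to first collapse the problem to a single configuration using Proposition~\ref{prop:C_n}, and then to extract bipartiteness from a parity count on clicks. Writing $O = O_Y^\pi$ and identifying $\sim_Y$-classes with orientations via~\eqref{eq:bij1}, the hypothesis $[g\cdot\pi]_Y = [g'\cdot\pi]_Y$ becomes the equality of orientations $O_Y^{g\pi} = O_Y^{g'\pi}$. I would split $D_n$ into the rotations $C_n=\langle\sigma\rangle$ and the reflections $\rho C_n$. Writing $\overline{O}$ for the orientation obtained by reversing every edge of $O$, one has $O_Y^{\sigma^a\pi} = \mathbf{c}_a(O)$, the result of $a$ clicks taken along the shift, while $O_Y^{\rho\sigma^a\pi} = \overline{\mathbf{c}_a(O)}$, since $\rev$ realizes edge-reversal.

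Next I would rule out the two ``same-type'' cases. If $g,g'$ are both rotations, then $O_Y^{g\pi}=O_Y^{g'\pi}$ contradicts Proposition~\ref{prop:C_n} outright. If both are reflections, say $g=\rho\sigma^a$ and $g'=\rho\sigma^b$ with $\sigma^a\ne\sigma^b$, then $\overline{\mathbf{c}_a(O)}=\overline{\mathbf{c}_b(O)}$; as edge-reversal is an involution, hence injective, this yields $\mathbf{c}_a(O)=\mathbf{c}_b(O)$, again contradicting Proposition~\ref{prop:C_n}. Thus exactly one of $g,g'$ is a rotation and the other a reflection, and after relabeling the hypothesis reads $\mathbf{c}_a(O)=\overline{\mathbf{c}_b(O)}$. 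Using $\Shift_n=\mathrm{id}$, which gives $\mathbf{c}_n(O)=O$, together with the composition rule $\Shift_a=\Shift_{a-b}\circ\Shift_b$ on shifts, I would set $O'=\mathbf{c}_b(O)$ and $t=(a-b)\bmod n$ to reach the clean equation $\mathbf{c}_t(O')=\overline{O'}$.

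The heart of the argument is then a parity count. A single click at a vertex $v$ reverses precisely the edges incident to $v$, so after the sequence $\mathbf{c}_t$ an edge $\{u,w\}$ is reversed relative to $O'$ exactly when the total number of clicks at $u$ and at $w$ is odd. Since $t<n$, each vertex is clicked at most once in $\mathbf{c}_t$; letting $x_v\in\{0,1\}$ record whether $v$ is clicked, the requirement $\mathbf{c}_t(O')=\overline{O'}$ forces $x_u+x_w\equiv 1 \pmod 2$, i.e.\ $x_u\ne x_w$, for every edge $\{u,w\}$. This is exactly a proper $2$-coloring of $Y$, so $Y$ is bipartite. (The degenerate value $t=0$ would force $O'=\overline{O'}$, which is impossible once $Y$ has an edge, while the edgeless graph is trivially bipartite.)

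I expect the main obstacle to be the bookkeeping in the reduction step, namely verifying that clicking composes correctly along successive shifts and that $\mathbf{c}_n=\mathrm{id}$, so that the mixed rotation/reflection case genuinely collapses to the single equation $\mathbf{c}_t(O')=\overline{O'}$. Once that equation is in hand, the passage from the click-parity condition to a bipartition is routine.
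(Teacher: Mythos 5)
Your argument is correct, but it reaches the conclusion by a genuinely different mechanism than the paper's. The paper also starts from Proposition~\ref{prop:C_n} to force $g$ and $g'$ into different cosets of $C_n$ in $D_n$, but then stays entirely at the level of permutations: writing $g=\sigma^{s}$ and $g'=\rho\sigma^{s'}$, it observes that $g\cdot\pi$ and $g'\cdot\pi$ split into two blocks $V_1$ and $V_2$ of sizes $\card{s'-s}$ and $n-\card{s'-s}$, each occurring as the same set of vertices but in reversed relative order in the two permutations; since $\sim_Y$ preserves the relative order of every adjacent pair, neither block can contain an edge, so $Y\subseteq K(V_1,V_2)$. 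You instead transport the hypothesis to $\Acyc(Y)$ via \eqref{eq:bij1}, collapse the mixed rotation/reflection case to the single equation $\mathbf{c}_t(O')=\overline{O'}$, and extract the bipartition from a parity count: a click at $v$ flips exactly the edges at $v$, so an edge is reversed iff an odd number of its endpoints is clicked, and with each vertex clicked at most once this is precisely a proper $2$-coloring. The two proofs produce the same bipartition --- your clicked/unclicked sets are the paper's $V_1$ and $V_2$ --- but your $\bmod\ 2$ edge-flip argument replaces the paper's order-preservation argument, at the modest cost of the click-composition bookkeeping ($\mathbf{c}_n=\mathrm{id}$, $\Shift_a=\Shift_{a-b}\circ\Shift_b$) and the degenerate case $t=0$, both of which you handle correctly and which the paper's block decomposition absorbs without comment. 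The parity viewpoint has the small added benefit of making it transparent why nothing special happens for repeated clicks only when $t<n$, which is exactly where connectedness and Proposition~\ref{prop:C_n} enter.
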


\begin{proof}
  From $[g\cdot\pi]_Y=[g'\cdot\pi]_Y$ it follows from
  Proposition~\ref{prop:C_n} that $g$ and $g'$ lie in different cosets
  of $C_n$ in $D_n$. Without loss of generality we may assume that
  $g=\sigma^s$ and $g'=\rho\sigma^{s'}$. Let $m=\card{s'-s}$ and $m' =
  n - m$. If $s'>s$ (resp. $s'<s$) the first (resp. last) $m$ elements
  of $g\cdot\pi$ and $g'\cdot \pi$ are the same but occur in reverse
  order. Call the set of these elements $V_1$. The remaining $m'$
  elements occur in reverse order as well in the two permutations. Let
  $V_2$ denote the set of these elements. For
  $[g\cdot\pi]_Y=[g'\cdot\pi]_Y$ to hold, there cannot be an edge
  between any two vertices in $V_1$, or between any two vertices in
  $V_2$. Therefore, the graph $Y$ must be a subgraph of $K(V_1,V_2)$,
  the complete bipartite graph with vertex sets $V_1$ and $V_2$.  \qed
\end{proof}

\begin{remark}
  \label{rem:bip1}
  The pairs $(\sigma^s, \rho \sigma^{s'})$ and $(\sigma^{s'},
  \rho\sigma^{s})$ determine the same bipartite graph in the proof
  above. Also, the vertex sets $V_1$ and $V_2$ can only consist of
  consecutive elements in~$\pi$.
\end{remark}
\begin{remark}
  \label{rem:bip2}
  If $Y$ is connected and bipartite then $\card{\{[g\cdot\pi]_Y \mid
    g\in D_n\}} = 2n-1$. This follows from the fact that at most two
  $\sim_Y$ classes can coincide as all distinct pairs $g$ and $g'$ for
  which equality holds leads to different sets $V_1(\{g,g'\})$ and
  $V_2(\{g,g'\})$ modulo Remark~\ref{rem:bip1}. The existence of two
  or more distinct partitions of $\vset[Y]$ into sets $V_1$ and $V_2$
  as above would imply that $Y$ is not connected.
\end{remark}

We next consider the quantity $\delta(Y)$, and will show how it is
determined by $\kappa(Y)$.

\begin{lemma}
  \label{lem:rho}
  The reflection map $\rev \colon S_Y \longrightarrow S_Y$ extends to an
  involution
  \begin{equation}
    \label{eq:rho-star}
    \rev^* \colon \Acyc(Y)/\simkappa \longrightarrow 
    \Acyc(Y)/\simkappa \;.
  \end{equation}
\end{lemma}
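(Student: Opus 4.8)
The plan is to define $\rev^*$ by sending the $\kappa$-equivalence class of $O_Y^\pi$ to that of $O_Y^{\rev(\pi)}$, to check that this assignment is independent of the chosen representative, and then to deduce the involution property from $\rho^2=\mathrm{id}$. Throughout I use the bijection $f_Y$ to identify $\Acyc(Y)/\simkappa$ with the set of connected components of $C(Y)$; thus $\kappa$-equivalence on $S_Y$ is the equivalence relation generated by the moves $\pi\mapsto\pi'$ for $\pi\sim_Y\pi'$ together with the shift moves $\pi\mapsto\Shift_1(\pi)$, and it suffices to show that $\rev$ carries each such generating move to a $\kappa$-equivalence.

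First I would dispose of the $\sim_Y$ moves. If $\pi$ and $\pi'$ differ only by transposing the two consecutive entries in positions $k$ and $k+1$, with these entries non-adjacent in $Y$, then applying $\rev$ reverses the order of all entries and places this same (still non-adjacent) pair in the consecutive positions $n-k$ and $n+1-k$. Hence $\rev(\pi)\sim_Y\rev(\pi')$, which is in fact stronger than $\kappa$-equivalence. This also shows that $\rev$ descends to $\SYmod$, where under $f_Y$ it is simply the reversal of every edge orientation.

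The crux is the shift moves, and here I would invoke the dihedral relation $\rho\sigma=\sigma^{-1}\rho$ holding in $D_n$. Writing the maps as group actions, $\rev(\Shift_1(\pi))=(\rho\sigma)\cdot\pi=(\sigma^{-1}\rho)\cdot\pi=\Shift_{n-1}(\rev(\pi))$, using $\sigma^{-1}=\sigma^{\,n-1}$. Because each application of $\Shift_1$ is an edge of $C(Y)$, the chain $\rev(\pi),\Shift_1(\rev(\pi)),\dots,\Shift_1^{\,n-1}(\rev(\pi))=\Shift_{n-1}(\rev(\pi))$ is a path in $C(Y)$, so $\rev(\Shift_1(\pi))\tsimkappa\rev(\pi)$. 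Together with the previous step this proves that $\rev$ preserves $\kappa$-equivalence, so $\rev^*$ is well defined. Finally, $\rho^2=\mathrm{id}$ gives $\rev(\rev(\pi))=\pi$, whence $(\rev^*)^2=\mathrm{id}$ and $\rev^*$ is an involution. I expect the shift step to be the only genuine obstacle: the point is that conjugating a single left shift past the reflection produces the inverse shift $\Shift_{n-1}$, which is harmless precisely because shifting is the edge relation defining the components of $C(Y)$.
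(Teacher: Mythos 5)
Your proof is correct and takes essentially the same route as the paper: show that $\rev$ respects the $\sim_Y$ generators, then the shift generator, and deduce the involution property from $\rho^2=\mathrm{id}$. Your handling of the shift step via the dihedral relation $\rho\sigma=\sigma^{-1}\rho$, giving $\rev(\Shift_1(\pi))=\Shift_{n-1}(\rev(\pi))\tsimkappa\rev(\pi)$, correctly supplies the detail that the paper compresses into a single ``likewise.''
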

\begin{proof}
  By the definition of $U(Y)$ it follows that if $\pi,\pi'\in S_Y$ are
  adjacent in $U(Y)$ then so are $\rev(\pi)$ and $\rev(\pi')$. It
  follows easily that $\pi\sim_Y\pi'$ implies
  $\rev(\pi)\sim_Y\rev(\pi')$. The map $\rev$ therefore extends to a
  map $\hat{\rev} \colon \SYmod \longrightarrow \SYmod$ by
  $\rev([\pi]_Y) = [\rev(\pi)]_Y$. Likewise, if $O_Y$ and $O_Y'$ are
  $\kappa$-equivalent then so are $\hat{\rev}(O_Y)$ and
  $\hat{\rev}(O_Y')$ [using the bijection~\eqref{eq:bij1}], and
  $\hat{\rev}$ extends to $\rev^*$ as in~\eqref{eq:rho-star} by
  $\rev^*(A) = \hat{\rev}(O_A)$ for any $O_A \in A \in
  \Acyc(Y)/\simkappa$. This map is clearly an involution since $\rev$
  itself is an involution. \qed
\end{proof}

\begin{proposition}
  \label{prop:delta}
  Let $Y$ be a connected undirected graph. If $Y$ is not bipartite then
  $\delta(Y)=\tfrac{1}{2}\kappa(Y)$. If $Y$ is bipartite
  then $\delta(Y)=\tfrac{1}{2}(\kappa(Y)+1)$.
\end{proposition}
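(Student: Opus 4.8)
The plan is to pass to the induced involution $\rev^*$ of Lemma~\ref{lem:rho} and count its fixed points. First I would note that $D(Y)$ is obtained from $C(Y)$ by adjoining, for each $\pi\in S_Y$, the reflection edge $\{[\pi]_Y,[\rev(\pi)]_Y\}$, whose endpoints lie in the $\kappa$-classes $A$ and $\rev^*(A)$ by the construction in Lemma~\ref{lem:rho}. Since the $C(Y)$-edges stay inside classes and every reflection edge from $A$ lands in $\rev^*(A)$, the connected components of $D(Y)$ are exactly the sets $A\cup\rev^*(A)$; as $\rev^*$ is an involution, each such set is either a single fixed class or a pair of classes swapped by $\rev^*$. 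Writing $f$ for the number of $\rev^*$-fixed classes, this gives at once
\begin{equation*}
  \delta(Y)=f+\tfrac{1}{2}\bigl(\kappa(Y)-f\bigr)=\tfrac{1}{2}\bigl(\kappa(Y)+f\bigr)\;,
\end{equation*}
so the proposition reduces to showing $f=0$ when $Y$ is not bipartite and $f=1$ when $Y$ is bipartite.

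A class $A$ is $\rev^*$-fixed precisely when it contains some $O_Y$ with $O_Y\simkappa\bar O_Y$, where $\bar O_Y$ is the reverse orientation. To exclude this in the non-bipartite case I would take any click sequence $\mathbf{c}$ realizing $O_Y\simkappa\bar O_Y$ and let $m_v$ be the number of clicks it performs at $v$. A click at $v$ reverses every edge at $v$ once, so the edge $\{u,v\}$ is reversed $m_u+m_v$ times; since $\bar O_Y$ differs from $O_Y$ on \emph{every} edge, $m_u+m_v$ is odd for all $\{u,v\}\in\eset[Y]$. Thus $v\mapsto m_v\bmod 2$ is a proper $2$-colouring of $Y$, forcing $Y$ to be bipartite. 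Hence $f=0$ whenever $Y$ is not bipartite, giving $\delta(Y)=\tfrac{1}{2}\kappa(Y)$.

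For bipartite $Y$ with parts $X_1,X_2$ let $O_0$ direct every edge from $X_1$ to $X_2$. Existence of a fixed class is easy: $X_1$ is an independent set of sources of $O_0$, so clicking its vertices in any order reverses every edge and produces $\bar O_0$; thus $O_0\simkappa\bar O_0$ and $f\ge 1$. For uniqueness I would show $[O_0]$ is the only fixed class, in two steps. First, if $O_Y$ is fixed then for every cycle $C$ the number $p_C$ of edges of $C$ agreeing with a fixed sense of traversal is unchanged by clicks (a source-to-sink move at a vertex of $C$ turns one agreeing and one disagreeing cycle-edge into the other), while reversal sends $p_C\mapsto\card{C}-p_C$; the fixed-point condition forces $p_C=\card{C}/2$, and a signed count then shows that the edge set $E^-$ on which $O_Y$ disagrees with $O_0$ meets every cycle evenly, so $E^-=\partial T$ is an edge cut. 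Second—the crux—one shows conversely that any acyclic orientation differing from $O_0$ by a cut is click-equivalent to $O_0$. Granting this, every fixed $O_Y$ satisfies $O_Y\simkappa O_0$, whence $f=1$ and $\delta(Y)=\tfrac{1}{2}(\kappa(Y)+1)$.

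The main obstacle is exactly this cut-reversal claim. The naive induction on $\card{T}$—click an extremal vertex of $O_Y$ lying in $T$, replacing $T$ by $T\setminus\{v\}$—breaks down because an acyclic cut-flip may have all of its sources and sinks outside $T$, as already happens on $C_6$; consequently neither $\card{T}$ nor $\card{E^-}$ is monotone under the admissible clicks. I expect the resolution to be structural rather than via a single monovariant: clicking any source or sink $v$ of $O_Y$ keeps $E^-$ a cut and replaces $T$ by $T\triangle\{v\}$, and one argues by induction on $n$, using connectivity of $Y$, that $T$ can always be steered to $\varnothing$ through acyclic orientations. Verifying that this steering terminates—equivalently, that $[O_0]$ already exhausts every acyclic cut-flip of $O_0$—is where essentially all the work of the bipartite case is concentrated.
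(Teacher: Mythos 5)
Your reduction $\delta(Y)=\tfrac{1}{2}\bigl(\kappa(Y)+f\bigr)$, where $f$ is the number of fixed points of the involution $\rev^*$ from Lemma~\ref{lem:rho}, is exactly the right frame, and two of the three pieces you need are correct and complete. The parity argument for the non-bipartite case is clean: a click at $u$ reverses every edge incident to $u$, so along any click-sequence realizing $O_Y\tsimkappa\bar O_Y$ the edge $\{u,v\}$ is reversed $m_u+m_v$ times, which must be odd because the reverse orientation disagrees on every edge, and $v\mapsto m_v\bmod 2$ is then a proper $2$-colouring. This is a genuinely different and more self-contained route than the paper's, which obtains the absence of fixed points from Proposition~\ref{prop:D_n}, i.e.\ from an analysis of when two elements of a $D_n$-orbit of a permutation can represent the same class. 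Your existence argument for bipartite $Y$ (click the independent set $X_1$ of sources of $O_0$) is also fine and gives $f\ge 1$.

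The gap is the one you flag yourself: you never prove $f\le 1$ in the bipartite case, so there you only obtain $\delta(Y)\ge\tfrac{1}{2}(\kappa(Y)+1)$. The uniqueness step you leave open --- that every acyclic orientation obtained from $O_0$ by reversing a cut $\partial T$ (equivalently, every acyclic $O_Y$ with $\nu_C(O_Y)=0$ for all cycles $C$) is click-equivalent to $O_0$ --- is not a routine verification: it is precisely the assertion that the cycle invariants $\nu_C$ separate $\kappa$-classes at the value $0$, a special case of the theorem that two acyclic orientations are click-related iff all their $\nu_C$ agree (Pretzel; cf.\ the cycle--cocycle reversing system of~\cite{Gioan:07}), and your own $C_6$ observation shows why no naive monovariant on $\card{T}$ or $\card{E^-}$ will close it. The paper sidesteps this entirely by staying at the level of the dihedral action on $\sim_Y$-classes: Proposition~\ref{prop:D_n} shows that a coincidence $[g\cdot\pi]_Y=[g'\cdot\pi]_Y$ with $g,g'$ in different cosets of $C_n$ forces a partition of $\vset[Y]$ into two consecutive independent blocks of $\pi$, and Remark~\ref{rem:bip2} uses connectedness of $Y$ to conclude that such a coincidence, hence the fixed class, is essentially unique. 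To finish along your own lines you must actually prove the cut-reversal claim (your ``steering'' sketch has no termination argument); the shorter repair is to import the orbit-coincidence argument of Remark~\ref{rem:bip2}.
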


\begin{proof}
  If $Y$ is not bipartite then Proposition~\ref{prop:D_n} ensures that
  the involution $\rho^*$ has no fixed points, from which we conclude
  that $2\delta(Y) = \kappa(Y)$.

  For the second statement, we use Proposition~\ref{lem:rho},
  Remark~\ref{rem:bip2}, and the connectedness of $Y$ to conclude that
  $\rev^*$ has precisely one fixed point. Since $\rev$ is an
  involution it follows that $\delta(Y) = \tfrac{\kappa(Y)-1}{2} + 1 =
  \tfrac{\kappa(Y)+1}{2} = \lceil \kappa(Y)/2 \rceil$.
\qed
\end{proof}
Thus, we always have $\delta(Y) = \lceil \kappa(Y)/2 \rceil$, and we
also have the following characterization of bipartite graphs:
\begin{corollary}
  \label{cor:bipartite}
  A connected graph $Y$ is bipartite if and only if $\kappa(Y)$ is odd.
\end{corollary}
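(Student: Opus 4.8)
The plan is to read off both directions of the equivalence directly from Proposition~\ref{prop:delta}, exploiting the single arithmetic constraint that $\delta(Y)$ is an integer because it counts connected components of $D(Y)$. The two cases of Proposition~\ref{prop:delta} express $\delta(Y)$ in terms of $\kappa(Y)$ with different parity consequences, and matching those expressions against integrality is all that is needed.

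First I would handle the forward implication. Suppose $Y$ is bipartite. Proposition~\ref{prop:delta} then gives $\delta(Y) = \tfrac{1}{2}(\kappa(Y)+1)$. Since $\delta(Y) \in \N$, the quantity $\kappa(Y)+1$ must be even, hence $\kappa(Y)$ is odd. This settles the ``only if'' direction immediately.

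For the converse I would argue by contraposition: assume $Y$ is connected but \emph{not} bipartite, and show $\kappa(Y)$ is even. Here the first case of Proposition~\ref{prop:delta} applies, yielding $\delta(Y) = \tfrac{1}{2}\kappa(Y)$. Again using $\delta(Y)\in\N$, we conclude that $\kappa(Y) = 2\delta(Y)$ is even. Thus $\kappa(Y)$ odd forces $Y$ to be bipartite, completing the equivalence.

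I do not expect any genuine obstacle, since the entire content has been pushed into Proposition~\ref{prop:delta}; the only point requiring a word of care is the observation that $\delta(Y)$ is an integer, which holds because it is by definition the number of connected components of the graph $D(Y)$ and is therefore a positive integer whenever $\Acyc(Y)$ is nonempty. A connected loop-free graph $Y$ always admits at least one acyclic orientation, so this mild nondegeneracy condition is automatic.
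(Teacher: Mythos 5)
Your proposal is correct and follows exactly the route the paper intends: the corollary is stated as an immediate consequence of Proposition~\ref{prop:delta}, and reading off the parity of $\kappa(Y)$ from the two formulas $\delta(Y)=\tfrac{1}{2}\kappa(Y)$ (non-bipartite) and $\delta(Y)=\tfrac{1}{2}(\kappa(Y)+1)$ (bipartite), using the integrality of $\delta(Y)$, is precisely the intended argument.
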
 
For an example where $\rev^*$ has a fixed point see
Figure~\ref{fig:C_K23} in Example~\ref{ex:K23}.  From
Proposition~\ref{prop:C_n} we can derive an upper bound for
$\kappa(Y)$.

\begin{proposition}
  \label{prop:1/n}
  If $Y$ is a connected undirected graph on $n$ vertices, then
  $\kappa(Y)\leq\tfrac{1}{n}\,\acyc(Y)$. 
\end{proposition}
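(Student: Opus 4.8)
The plan is to prove the bound by showing that \emph{every} connected component of $C(Y)$ contains at least $n$ vertices. Since the vertex set of $C(Y)$ is $\Acyc(Y)$, which has cardinality $\acyc(Y)$ and partitions into exactly $\kappa(Y)$ components, a uniform lower bound of $n$ on component sizes gives $\acyc(Y)\ge n\,\kappa(Y)$, which rearranges to the desired inequality $\kappa(Y)\le\tfrac{1}{n}\acyc(Y)$.

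To produce $n$ distinct vertices inside a given component, I would fix any class $[\pi]_Y$ lying in that component and consider the list
\begin{equation*}
  [\Shift_0(\pi)]_Y,\ [\Shift_1(\pi)]_Y,\ \ldots,\ [\Shift_{n-1}(\pi)]_Y\;,
\end{equation*}
where $[\Shift_s(\pi)]_Y = [\sigma^s\cdot\pi]_Y$. Consecutive entries in this list are joined by an edge of $C(Y)$ directly from the definition of $\eset[C(Y)]$, so all $n$ of these classes lie in the same component as $[\pi]_Y$. It then remains only to argue that they are pairwise distinct.

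Distinctness is exactly what Proposition~\ref{prop:C_n} supplies: the group elements $\sigma^0,\sigma^1,\ldots,\sigma^{n-1}$ are pairwise distinct in $C_n$ because $\sigma$ has order $n$, and $Y$ is connected by hypothesis, so Proposition~\ref{prop:C_n} yields $[\sigma^s\cdot\pi]_Y\ne[\sigma^{s'}\cdot\pi]_Y$ whenever $0\le s<s'\le n-1$. Hence the chosen component has at least $n$ distinct vertices. Since this argument applies verbatim to every component, each of the $\kappa(Y)$ components of $C(Y)$ has size at least $n$, and summing over components gives $\acyc(Y)\ge n\,\kappa(Y)$.

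I do not expect a genuine obstacle here, as the bound reduces cleanly to Proposition~\ref{prop:C_n}. The one point requiring care—and the reason the hypotheses enter—is the distinctness step, which uses both that $C_n$ has order exactly $n$ (so the shift orbit does not close up early) and that $Y$ is connected (so that Proposition~\ref{prop:C_n} is available); without connectedness a shift orbit could collapse to fewer than $n$ classes and the argument would break.
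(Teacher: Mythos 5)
Your proof is correct and follows essentially the same route as the paper: both arguments take the shift orbit $[\Shift_0(\pi)]_Y,\dots,[\Shift_{n-1}(\pi)]_Y$ inside a component of $C(Y)$, invoke Proposition~\ref{prop:C_n} (with connectedness of $Y$) to get $n$ distinct, pairwise $\kappa$-equivalent classes, and conclude $\acyc(Y)\ge n\,\kappa(Y)$. Your write-up simply makes explicit the component-counting step that the paper leaves to the reader.
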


\begin{proof}
  By Proposition~\ref{prop:C_n}, for any $\pi\in S_Y$, the set
  $\{O_Y^{\Shift_1(\pi)},\dots,O_Y^{\Shift_n(\pi)}\}$ contains $n$
  distinct acyclic orientations that are all $\kappa$-equivalent and
  the proof follows.
\qed
\end{proof}
This bound is sharp for certain graphs such as the complete graph $K_n$. 


\section{Poset Structure of $\kappa$-Equivalence Classes}
\label{sec:posets}

An edge $e$ of an undirected graph $Y$ is a \emph{bridge} if removing
$e$ increases the number of connected components of $Y$. An edge that
is not a bridge is a \emph{cycle-edge}, or equivalently, an edge $e$ is a
cycle-edge if it is contained in a cycle traversing $e$ precisely
once. The main result in~\cite{Macauley:08b} is a recurrence relation
for $\kappa(Y)$ under edge deletion and edge contraction.
\begin{theorem}[\cite{Macauley:08b}]
  \label{thm:kappa}
  Let $Y$ be a finite undirected graph with $e\in\eset[Y]$, and let
  $Y_e'$ be the graph obtained from $Y$ by deleting $e$, and let
  $Y_e''$ be the graph obtained from $Y$ by contracting $e$. Then
  \begin{equation}
    \label{eq:kappa}
    \kappa(Y) = 
      \left\{
      \begin{array}{cl}
        \kappa(Y_1)\kappa(Y_2), &\;\; 
          \mbox{$e$ is a bridge linking $Y_1$ and $Y_2$}\,, \\
        \kappa(Y_e')+\kappa(Y_e''), &\;\; 
          \mbox{$e$ is a cycle-edge}\;.
      \end{array}
      \right. 
  \end{equation}
\end{theorem}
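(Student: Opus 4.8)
The plan is to treat the two cases of \eqref{eq:kappa} separately, since the bijection $\Theta$ of \eqref{eq:bijection1} is available only for a cycle-edge. For a cycle-edge $e$ I would obtain the additive relation directly from $\Theta$ by comparing cardinalities, while for a bridge I would establish a multiplicative decomposition of $\Acyc(Y)/\simkappa$ by restricting orientations to the two sides.

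\medskip
\noindent\emph{Cycle-edge case.} Here the whole content is the construction of the bijection $\Theta\colon\Acyc(Y)/\simkappa\to(\Acyc(Y'_e)/\simkappa)\cup(\Acyc(Y''_e)/\simkappa)$: once it is known to map onto the \emph{disjoint} union, taking cardinalities immediately gives $\kappa(Y)=\kappa(Y'_e)+\kappa(Y''_e)$. To build $\Theta$ I would start from the orientation-level bijection $\beta_e$ of \eqref{eq:bijection2}, which already sorts each $O_Y$ into a deletion class or a contraction class according to whether the edge $e$ is forced or flippable (and, when flippable, according to the direction of $e$). The difficulty is that $\beta_e$ does \emph{not} respect $\sim_\kappa$: a single source-to-sink operation can change whether $e$ is flippable and thereby move an orientation from the deletion side to the contraction side. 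This is precisely why one passes to the poset attached to each $\kappa$-class; I would use that poset to read off a $\sim_\kappa$-invariant decision for $e$ and let $\Theta$ send the class to the corresponding class of $Y'_e$ or $Y''_e$. Checking that this assignment is well defined and invertible---equivalently, that every $\kappa$-class of $Y'_e$ and of $Y''_e$ lifts uniquely to a $\kappa$-class of $Y$---is the main obstacle of the theorem.

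\medskip
\noindent\emph{Bridge case.} Write $e=\{v,w\}$ with $v\in Y_1$ and $w\in Y_2$, and consider the restriction map
\begin{equation*}
  \Phi\colon \Acyc(Y)/\simkappa\longrightarrow
  \bigl(\Acyc(Y_1)/\simkappa\bigr)\times\bigl(\Acyc(Y_2)/\simkappa\bigr),
  \qquad [O_Y]\longmapsto\bigl([O_{Y_1}],[O_{Y_2}]\bigr);
\end{equation*}
showing it is a bijection yields $\kappa(Y)=\kappa(Y_1)\kappa(Y_2)$. It is well defined because every source-to-sink operation in $Y$ projects to a source-to-sink operation in one $Y_i$ and to the identity on the other (a click at $v$ is a click at $v$ in $Y_1$ together with a flip of the bridge, and symmetrically at $w$), and surjectivity is clear since any chosen representatives can be joined by orienting $e$ arbitrarily. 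For injectivity the key lemma is that the bridge may be flipped for free, i.e.\ $O_Y\sim_\kappa O^{\rho(e)}_Y$: if $O_Y(e)=(v,w)$, then clicking every vertex of $Y_1$ once in a linear-extension order of $O_{Y_1}$ is realizable in $Y$ (when the rotation reaches $v$ the bridge still points $v\to w$, so $v$ is a source of $O_Y$), returns $O_{Y_1}$ to itself and leaves $O_{Y_2}$ untouched by Proposition~\ref{prop:C_n} applied to the connected graph $Y_1$, and flips only $e$; the opposite flip comes from a rotation of $Y_2$. Using this, given $O_{Y_1}\sim_\kappa\tilde O_{Y_1}$ and $O_{Y_2}\sim_\kappa\tilde O_{Y_2}$, I would steer the $Y_1$-part and then the $Y_2$-part along their click sequences---inserting a free bridge-flip whenever a click at $v$ or $w$ must be enabled, which keeps both restrictions fixed---and finally match the bridge, so that $O_Y\sim_\kappa\tilde O_Y$.
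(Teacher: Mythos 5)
Your bridge case is correct, and it is in fact more than this paper supplies: here the multiplicative case is only asserted to be ``relatively straightforward'' and deferred to \cite{Macauley:08b}. Your product map $\Phi$, the observation that a click at $v$ projects to a click at $v$ in $Y_1$ together with a flip of $e$ and the identity on $Y_2$, and the ``free bridge flip'' obtained by rotating one side through a full linear extension (which restores that side's restriction while reversing only $e$) combine into a complete injectivity argument; the minor quibble is that the rotation identity is just $\Shift_n(\pi)=\pi$ rather than a consequence of Proposition~\ref{prop:C_n}.

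The genuine gap is in the cycle-edge case, which is the entire content of the theorem and of the paper. You correctly diagnose that $\beta_e$ does not descend to $\kappa$-classes and that one must instead define $\Theta$ using a $\tsimkappa$-invariant attached to $e$, namely the interval/poset structure --- this is exactly the paper's strategy. But you then state that checking well-definedness and bijectivity of $\Theta$ is ``the main obstacle'' and stop, so the additive identity $\kappa(Y)=\kappa(Y'_e)+\kappa(Y''_e)$ is never actually derived. The missing content is substantial: the invariance of the $vw$-interval $\mathcal{I}(O_Y)$ under $\tsimkappa$, proved via the cycle invariants $\nu_P$ (Proposition~\ref{prop:I*}); the normal form for click-sequences in which the vertices of the interval occur in consecutive blocks (Proposition~\ref{prop:forcedclick} and Corollary~\ref{cor:intervals}); the collapse of such blocks to single clicks of $V_I$ in the contracted graph $Y_I$, which yields injectivity (Proposition~\ref{prop:Y_I} together with Proposition~\ref{prop:diagram}); and the surjectivity argument onto $\Acyc(Y')/\simkappa$, which chooses a representative maximizing $\nu_{P'}$ along an undirected $vw$-path to produce a preimage whose class contains no orientation with $e$ flippable. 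Without these steps the proposal in this case is a restatement of the problem rather than a proof.
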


The first case implies that $\kappa(Y)$ is unaffected by removal of
bridges, and is relatively straightforward to establish. However, the
case where $e$ is a cycle-edge is harder, and was proven
in~\cite{Macauley:08b} on the level of acyclic orientations. In this
section, we will show how one may associate a poset with each
$\kappa$-equivalence class. The properties of this poset give us
better insight into the structure of
$\Acyc(Y)/\simkappa$. Additionally, it allows us to construct an
alternative proof for Theorem~\ref{thm:kappa}.

Throughout, we will let $e=\{v,w\}$ be a fixed cycle-edge of the
connected graph $Y$, and for ease of notation we set $Y'=Y'_e$ and
$Y''=Y''_e$. For $O_Y\in\Acyc(Y)$ we let $O_{Y'}$ and $O_{Y''}$ denote
the induced orientations of $Y'$ and $Y''$. Notice that $O_{Y'}$ is
always acyclic, while $O_{Y''}$ is acyclic if and only if there is no
directed path with endpoints $v$ and $w$ in $O_{Y'}$. Finally, we let
$[O_Y]$ denote the $\kappa$-equivalence class containing $O_Y$.

The interval $[a,b]$ of a poset $\mathcal{P}$ (where $a\leq b$) is the
subposet consisting of all $c\in \mathcal{P}$ such that $a\leq c\leq
b$. Viewing a finite poset $\mathcal{P}$ as a directed graph
$D_{\mathcal{P}}$, the interval $[a,b]$ contains precisely the
vertices that lie on a directed path from $a$ to $b$, and thus is a
vertex-induced subgraph of $D_{\mathcal{P}}$. By assumption, $Y$
contains the edge $\{v,w\}$, so for all $O_Y\in\Acyc(Y)$ either
$v\leq_{O_Y}\!w$ or $w\leq_{O_Y}\!v$. In this section, we will study
the interval $[v,w]$ in the poset $O_Y$ (when $v\leq_{O_Y} w$) and its
behavior under clicks.

\begin{definition}
  Let $\Acyc_\le(Y)$ be the set of acyclic orientations of
  vertex-induced subgraphs of $Y$. We define the map
  \begin{equation*}
    \mathcal{I}\colon\Acyc(Y)\longrightarrow\Acyc_\le(Y)\;,
  \end{equation*}
  by $\mathcal{I}(O_Y) = [v,w]$ if $v\leq_{O_Y}\!w$, and by $\mathcal{I}(O_Y)
  = \varnothing$ (the null graph) otherwise. When
  $\mathcal{I}(O_Y) \ne \varnothing$ we refer to $\mathcal{I}(O_Y)$ as the
  \emph{$vw$-interval} of $O_Y$.
\end{definition}
Elements of $\Acyc_\le(Y)$ can be thought of as subposets of
$\Acyc(Y)$. Through a slight abuse of notation, we will at times refer
to $\mathcal{I}(O_Y)$ as a poset, a directed graph, or a subset of
$\vset[O_Y]$. In this last case, it is understood that the relations
are inherited from $O_Y$.

\medskip

For an undirected path $P = v_1,v_2,\dots,v_k$ in $Y$, we define the
function 
\begin{equation*}
\nu_P\colon\Acyc(Y)\longrightarrow\Z \;, 
\end{equation*}
where $\nu_P(O_Y)$ is the number of edges oriented as $(v_i,v_{i+1})$
in $O_Y$, minus the number of edges oriented as $(v_{i+1},v_i)$. It is
clear that if $P$ is a cycle, then $\nu_P$ is preserved under clicks,
and in this case, $\nu_P$ extends to a map $\nu_P^* \colon \Acyc(Y)/\simkappa
\longrightarrow \Z$.

\medskip

We will now prove a series of structural results about the
$vw$-interval. Since $\{v,w\}\in\eset[Y]$, every $\kappa$-class
contains at least one orientation $O_Y$ with $v\leq_{O_Y}\!w$, and
thus there is at least one element $O_Y$ in each $\kappa$-class with
$\mathcal{I}(O_Y) \ne \varnothing$. The next results shows how we can
extend the notion of $vw$-interval from over $\Acyc(Y)$ to
$\Acyc(Y)/\simkappa$.

\begin{proposition}
  \label{prop:I*}
  The map $\mathcal{I}$ can be extended to a map
  \begin{equation*}
    \mathcal{I}^*\colon\Acyc(Y)/\simkappa\longrightarrow\Acyc_\le(Y)
    \text{\quad by\quad} 
    \mathcal{I}^*([O_Y]) = \mathcal{I}(O^1_Y) \;,
  \end{equation*}
  where $O^1_Y$ is any element of $[O_Y]$ for which $\mathcal{I}(O_Y^1)
  \ne \varnothing$.
\end{proposition}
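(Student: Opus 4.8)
The plan is to show that $\mathcal{I}^*$ is well defined, i.e. that the value $\mathcal{I}(O^1_Y)$ is independent of the choice of representative $O^1_Y\in[O_Y]$ with $\mathcal{I}(O^1_Y)\ne\varnothing$. Existence of at least one such representative is already observed above, since every $\kappa$-class contains an orientation with $v\leq_{O_Y}w$. Thus it suffices to prove: if $O^1_Y$ and $O^2_Y$ are $\kappa$-equivalent and both satisfy $v\leq_{O_Y}w$, then $\mathcal{I}(O^1_Y)=\mathcal{I}(O^2_Y)$ as vertex-induced sub-orientations (not merely as vertex sets). Since $O^1_Y\simkappa O^2_Y$, there is a sequence of clicks carrying one to the other, so the task reduces to understanding how $\mathcal{I}$ changes under a single click.

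The core is a \textbf{single-click lemma}: if $u$ is a source of $O_Y$ with $v\leq_{O_Y}w$ and $u\notin\{v,w\}$, then clicking $u$ leaves $\mathcal{I}(O_Y)=[v,w]$ unchanged. The key observation is that a source $u\ne v$ can be neither an interior vertex nor an endpoint of any directed path of the form $v\to\cdots\to w$, $v\to\cdots\to y$, or $y\to\cdots\to w$ for $y\ne u$: before the click $u$ has no in-edge and so cannot be entered, after the click $u$ has no out-edge and so cannot be left, while $u\notin\{v,w\}$ rules it out as an endpoint. Hence all such paths use only edges not incident to $u$; since a click alters only the edges at $u$, every betweenness relation $v\leq_{O_Y}y\leq_{O_Y}w$ (for $y\ne u$) is preserved, $u$ itself lies in neither interval, and the edges induced among interval vertices are untouched. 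Note that $w$ is never a source while $v\leq_{O_Y}w$, because the edge $\{v,w\}$ supplies an in-edge at $w$; so the only click that can destroy the interval is a click of $v$, which flips $e$ and passes to the regime $w\leq_{O_Y}v$.

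By the single-click lemma, $\mathcal{I}$ is constant along any run of clicks that stays in the regime $v\leq_{O_Y}w$. Moreover, every orientation with $v\leq_{O_Y}w$ can be carried, by clicks of sources $\ne v$, to one in which $v$ itself is a source: choosing a representative $\pi$ with $v$ in position $p$, the $p-1$ vertices preceding $v$ are all distinct from $w$, and shifting them to the back one at a time realizes $p-1$ clicks of front vertices $\ne v$, each preserving $\mathcal{I}$ and keeping us in the regime $v\leq_{O_Y}w$. It therefore suffices to prove that all orientations in a fixed $\kappa$-class in which $v$ is a source share the same $vw$-interval.

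The main obstacle is exactly this last reduction, since two such orientations may be joined only by a click-path that clicks $v$, wanders through the regime $w\leq_{O_Y}v$, and re-enters by a click of $w$. To control such an excursion I would argue symmetrically in the regime $w\leq_{O_Y}v$: there $v$ is never a source, $w$ plays the role $v$ played in the first regime, and the single-click lemma with $v$ and $w$ interchanged shows that the reverse interval $[w,v]=\mathcal{I}(O^{\rho(e)}_Y)$ is constant throughout the excursion. The hard part is to reconcile the entry click at $v$ with the exit click at $w$: I would track the directed paths through the neighbors of $v$ and of $w$, showing that the vertices gained or lost from the interval when $v$ is clicked are precisely those restored when $w$ is later clicked, using the invariance of $[w,v]$ in between. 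This yields $\mathcal{I}(O^1_Y)=\mathcal{I}(O^2_Y)$ and hence the well-definedness of $\mathcal{I}^*$.
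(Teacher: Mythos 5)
Your reduction to the single-click lemma is sound as far as it goes: a source $u\notin\{v,w\}$ cannot lie on any directed $v$--$w$ path before or after it is clicked, and the edges among interval vertices are untouched, so $\mathcal{I}$ is indeed constant along any run of clicks that never clicks $v$. But the proof stops exactly where the real difficulty begins. Two representatives $O^1_Y$ and $O^2_Y$ with $v\leq w$ may be joined only by click-sequences that click $v$, wander through the regime $w\leq v$, and return by clicking $w$ (possibly many times), and your treatment of such an excursion is a plan, not an argument: ``I would track the directed paths through the neighbors of $v$ and of $w$, showing that the vertices gained or lost when $v$ is clicked are precisely those restored when $w$ is later clicked.'' That claim is the entire content of the proposition in this case, and it is not obviously true as stated --- the interval $[v,w]$ recreated upon clicking $w$ is determined by the directed paths from $v$ to the in-neighbors of $w$ at that moment, which is not information carried by the invariance of $[w,v]$ during the excursion. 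So as written there is a genuine gap at the critical step.

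The paper sidesteps the excursion problem entirely by using the cycle invariant $\nu_P$ that is set up immediately before the proposition: given a vertex $a\in\mathcal{I}(O^1_Y)$, take a directed $v$--$w$ path $P'$ of length $k$ through $a$ in $O^1_Y$, close it up with the edge $e$ to form a cycle $P$, and observe $\nu_P(O^1_Y)=k-1$. Since $\nu_P$ is constant on the $\kappa$-class and $O^2_Y$ also orients $e$ as $(v,w)$, the value $k-1$ forces every edge of $P'$ to be oriented forward in $O^2_Y$ as well, so $P'$ is a directed $v$--$w$ path there too; symmetry gives equality of the intervals. This is a global conserved-quantity argument rather than a step-by-step tracking argument, and it is what lets one compare the two endpoints without ever analyzing what happens in the $w\leq v$ regime. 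If you want to salvage your approach, the cleanest fix is to replace your excursion analysis with exactly this $\nu_P$ computation.
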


\begin{proof}
  It suffices to prove that $\mathcal{I}^*$ is well-defined. Consider
  $O^1_Y\tsimkappa O^2_Y$ with $v\leq_{O^i_Y}\!w$ for $i=1,2$. To show
  that $\mathcal{I}(O^1_Y) = \mathcal{I}(O^2_Y)$ let $a$ be a vertex
  in $\mathcal{I}(O^1_Y)$.  Then $a$ lies on a directed path $P'$ from
  $v$ to $w$ in $O^1_Y$, say of length $k\geq 2$. Let $P$ be the cycle
  formed by adding vertex $v$ to the end of $P'$. Clearly
  $\nu_P(O_Y^1) = k-1$ since $O^1_Y(e) = (v,w)$.

  By assumption, $O^2_Y\in[O^1_Y]$ with $v\leq_{O^2_Y}\!w$. Since
  $\nu_P$ is constant on $[O^1_Y]$ it follows from $\nu_P(O_Y^1) = k-1
  = \nu_P(O_Y^2)$ that every edge of $P'$ is oriented identically in
  $O^1_Y$ and $O^2_Y$, and hence that every directed path $P'$ in
  $O_Y^1$ is contained in $O_Y^2$ as well. The reverse inclusion
  follows by an identical argument. \qed
\end{proof}
 
In light of Proposition~\ref{prop:I*}, we define the $vw$-interval of
a $\kappa$-class $[O_Y]$ to be $\mathcal{I}^*([O_Y])$. The $vw$-interval
will be central in understanding properties of click-sequences. First,
we will make a simple observation without proof, which also appears
in~\cite{Speyer:07} in the context of admissible sequences in Coxeter
theory.

\begin{proposition}
  \label{prop:alternating}
  Let $O_Y\in\Acyc(Y)$, let $\mathbf{c}=c_1 c_2\cdots c_m$ be an
  associated click-sequence, and consider any directed edge
  $(v_1,v_2)$ in $O_Y$. Then the occurrences of $v_1$ and $v_2$ in
  $\mathbf{c}$ alternate, with $v_1$ appearing first.
\end{proposition}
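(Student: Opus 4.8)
The plan is to track the orientation of the single edge $\{v_1,v_2\}$ as the click-sequence $\mathbf{c}=c_1 c_2\cdots c_m$ is applied, and to observe that its behavior is entirely controlled by the clicks at $v_1$ and $v_2$. The central fact is purely local: performing a click at a vertex $u$ reverses precisely the edges incident to $u$ and leaves every other edge unchanged. Consequently, among the clicks $c_1,\dots,c_m$, only those equal to $v_1$ or $v_2$ can alter the orientation of $\{v_1,v_2\}$; every other click fixes it. This reduces the entire question to the interleaving of the two occurrence-subsequences of $v_1$ and $v_2$.

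Next I would record the two constraints imposed by the source-to-sink operation. For $c_i=v_1$ to be a valid click, $v_1$ must be a source in the current orientation, so every edge at $v_1$ points away from $v_1$; in particular $\{v_1,v_2\}$ must read $(v_1,v_2)$ immediately before the click, and it flips to $(v_2,v_1)$ immediately after. By the symmetric argument, a click $c_i=v_2$ requires the edge to read $(v_2,v_1)$ beforehand and flips it back to $(v_1,v_2)$. Hence a click at $v_1$ is possible only from state $(v_1,v_2)$, and a click at $v_2$ only from state $(v_2,v_1)$.

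To finish, I would combine these observations with the initial condition. Since $(v_1,v_2)$ is a directed edge of $O_Y$, the edge begins in state $(v_1,v_2)$. The first click among $\{v_1,v_2\}$ must therefore be $v_1$, because $v_2$ cannot be clicked from state $(v_1,v_2)$; after it the state is $(v_2,v_1)$, so the next such click must be $v_2$, returning the edge to $(v_1,v_2)$, and so on. Inductively, the subsequence of $\mathbf{c}$ consisting of the occurrences of $v_1$ and $v_2$ is exactly the alternating pattern $v_1,v_2,v_1,v_2,\dots$, with $v_1$ appearing first, which is the claim.

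There is no genuine obstacle here: the statement is local to a single edge, and the only thing to verify is that the validity requirement of each click forces the correct state of that edge beforehand. The one point worth stating explicitly is that clicks at vertices other than $v_1$ and $v_2$ are irrelevant to the orientation of $\{v_1,v_2\}$, which is precisely why the two occurrence-subsequences interleave without interference.
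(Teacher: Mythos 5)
Your proof is correct. Note that the paper deliberately states this proposition without proof (calling it "a simple observation" and citing its appearance in the Coxeter-theoretic literature), so there is no argument in the paper to compare against; your write-up supplies the standard local argument that the omission presupposes. The two key points you isolate are exactly the right ones: a click at $u$ reverses precisely the edges incident to $u$, so only clicks at $v_1$ or $v_2$ affect the state of the edge $\{v_1,v_2\}$; and the validity requirement (a vertex may be clicked only when it is a source) forces the edge to read $(v_1,v_2)$ before any click at $v_1$ and $(v_2,v_1)$ before any click at $v_2$. Combined with the initial state $(v_1,v_2)$, the induction gives the alternating pattern with $v_1$ first, which is the full claim.
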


Because $\{v,w\}\in\eset[Y]$, we can say more about the vertices in
$\mathcal{I}(O_Y)$ that appear between successive instances in $v$ and $w$
in a click-sequence. 

\begin{proposition}
  Let $O_Y\in\Acyc(Y)$, and let $\mathbf{c}=c_1 c_2\cdots c_m$ be an
  associated click-sequence that contain every vertex of
  $\mathcal{I}(O_Y)$ at least once and with $c_1=v$. Then every vertex of
  $\mathcal{I}(O_Y)$ appears in $\mathbf{c}$ before any vertex in
  $\mathcal{I}(O_Y)$ appears twice. 
\end{proposition}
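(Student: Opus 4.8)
The plan is to track, for each vertex $a$ of $\mathcal{I}(O_Y)$, the position of its first occurrence in $\mathbf{c}$ and, when it exists, the position of its second occurrence, and then to prove two things: that the earliest second occurrence among interval vertices is that of $v$ itself, and that every interval vertex makes its first appearance strictly before that position. The essential tool is Proposition~\ref{prop:alternating}: for a directed edge $(x,y)$ of $O_Y$ the occurrences of $x$ and $y$ in $\mathbf{c}$ interleave as $x,y,x,y,\dots$, so the first occurrence of $x$ precedes the first of $y$, the two second occurrences (when they exist) obey the same order, and the first occurrence of $y$ lies strictly between the two occurrences of $x$. The hypotheses enter only to guarantee that every interval vertex occurs at least once and that $v$ occurs first.

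First I would dispose of the trivial case in which no interval vertex is clicked twice, where the conclusion holds vacuously. Otherwise, fix an interval vertex $b$ that is clicked twice together with a directed path $v=u_0\to u_1\to\cdots\to u_r=b$ in $O_Y$, which exists because $v\leq_{O_Y} b$. Running the interleaving property edge by edge \emph{down} the path from $u_r$ to $u_0$ shows that each $u_i$ must also be clicked twice and that their second occurrences strictly increase along the path; in particular $v$ is clicked twice and its second occurrence precedes that of $b$. Since $b$ was an arbitrary twice-clicked interval vertex, the second occurrence of $v$ is the earliest second occurrence among all interval vertices, so it marks precisely the first moment at which any vertex of $\mathcal{I}(O_Y)$ repeats.

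Next I would bound all first occurrences above by the second occurrence of $v$. Applying the interleaving property to the edge $e$, which is oriented $v\to w$ because $v\leq_{O_Y} w$, forces the first occurrence of $w$ to lie strictly before the second occurrence of $v$. For an arbitrary interval vertex $a$ I would then choose a directed path $a\to\cdots\to w$ in $O_Y$, observing that all of its vertices again lie in $[v,w]$ and hence occur in $\mathbf{c}$, and run the interleaving property \emph{up} this path to conclude that the first occurrence of $a$ does not exceed that of $w$. Chaining these inequalities places the first occurrence of every interval vertex strictly before the second occurrence of $v$, which by the previous paragraph is exactly the first repeat of an interval vertex; this is the desired conclusion.

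I expect the main obstacle to be the careful bookkeeping of the \emph{existence} of second occurrences: since $\mathbf{c}$ is finite and a vertex may be clicked only once, the monotonicity statements must be phrased so that existence propagates in the right direction — downward toward $v$ in the first-repeat argument (a twice-clicked vertex forces its predecessor to be twice-clicked), and not at all in the first-occurrence bounds, which rely only on the single occurrences that the hypothesis already supplies. Once the three consequences of Proposition~\ref{prop:alternating} are isolated and the two path directions are treated separately, the inequalities combine immediately.
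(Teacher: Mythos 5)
Your proposal is correct and runs on exactly the same engine as the paper's proof: Proposition~\ref{prop:alternating} applied along directed paths within the interval (to show that $v$'s second click is the earliest repeat among vertices of $\mathcal{I}(O_Y)$) and to the edge $\{v,w\}$ (to place every first occurrence before that repeat). The paper packages these same facts as a minimal-counterexample contradiction rather than your direct two-inequality argument, but the content is essentially identical.
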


\begin{proof}
  The proof is by contradiction. Assume the statement is false, and
  let $a\in\mathcal{I}(O_Y)$ be the first vertex whose second instance
  in $\mathbf{c}$ occurs before the first instance of some other
  vertex $z \in \mathcal{I}(O_Y)$. If $a\neq v$, then $a$ is not a
  source in $O_Y$, and there exists a directed edge $(a',a)$. By
  Proposition~\ref{prop:alternating}, $a'$ must appear in $\mathbf{c}$
  before the first instance of $a$, but also between the two first
  instances of $a$. This is impossible, because $a$ was chosen to be
  the first vertex appearing twice in $\mathbf{c}$. That only leaves
  $a=v$, and $v$ must appear twice before the first instance of
  $w$. However, this contradicts the statement of
  Proposition~\ref{prop:alternating} because
  $\{v,w\}\in\eset[Y]$. \qed
\end{proof}

The next result shows that for any click-sequence $\mathbf{c}$ that
contains every element in $\mathcal{I}(O_Y)$ precisely once, we may assume
without loss of generality that the vertices in $\mathcal{I}(O_Y)$
appear consecutively.

\begin{proposition}
  \label{prop:forcedclick}
  Let $O_Y\in\Acyc(Y)$ be an acyclic orientation with $v\leq_{O_Y} w$.
  If $\mathbf{c} = c_1c_2\cdots c_m$ is an associated click-sequence
  containing precisely one instance of $w$, and no subsequent
  instances of vertices from $\mathcal{I}(O_Y)$, then there exists a
  click-sequence $\mathbf{c}' = c'_1c'_2\cdots c'_m$ such that $(i)$
  there exists an interval $[p,q]$ of $\N$ with
  $c'_j\in\mathcal{I}(O_Y)$ iff $p\leq j\leq q$, and $(ii)$
  $\mathbf{c}(O_Y) = \mathbf{c}'(O_Y)$.
\end{proposition}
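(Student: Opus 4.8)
The plan is to realize $\mathbf{c}'$ from $\mathbf{c}$ by a sequence of elementary commutations, each exchanging two consecutive clicks of non-adjacent vertices. Such a swap is exactly an adjacency in $U(Y)$ transported through the bijection~\eqref{eq:bij1}, so it preserves the resulting orientation; thus any sequence reached this way automatically satisfies $(ii)$. Moreover, since we never exchange two clicks of graph-adjacent vertices, the relative order of the occurrences of any adjacent pair $\{x,y\}$ is the same in every intermediate sequence, so the alternation data of Proposition~\ref{prop:alternating} remains valid throughout.

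First I would pin down the $\mathcal{I}$-clicks. Applying Proposition~\ref{prop:alternating} to the edge $(v,w)$ together with the hypothesis (one $w$, nothing from $\mathcal{I}(O_Y)$ afterwards) shows that $v$ is the first and $w$ the last $\mathcal{I}$-vertex clicked. I would then show every vertex of $\mathcal{I}(O_Y)$ is clicked exactly once: each is clicked at least once because $w$ can become a source only after all vertices below it---in particular all of $\mathcal{I}(O_Y)$---have been clicked; and none is clicked twice, by induction on the length of a longest directed path from $v$, since a second click of $a$ would force (between its two instances) a second click of an in-neighbour of $a$ lying in $\mathcal{I}(O_Y)$, pushing the contradiction down to $v$, which is clicked once. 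Hence the $\mathcal{I}$-clicks occupy $\lvert \mathcal{I}(O_Y)\rvert$ distinct positions, none before the $v$-click and none after the $w$-click.

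The heart of the argument is a dichotomy for the remaining vertices. For $b\notin\mathcal{I}(O_Y)$ adjacent to some $a\in\mathcal{I}(O_Y)$, the edge orientations cannot simultaneously give $v\le_{O_Y} a\le_{O_Y} b$ and $b\le_{O_Y} a'\le_{O_Y} w$ (that would force $b\in[v,w]$), so all $\mathcal{I}$-neighbours of $b$ are of a single type: either every such edge points into $b$ (``type-out'', whence $v\le_{O_Y} b$) or every one points out of $b$ (``type-in'', whence $b\le_{O_Y} w$). Combined with Proposition~\ref{prop:alternating} this yields the order facts I need: a type-out neighbour forces $b$ to occur only once and after that neighbour; a type-in neighbour is always clicked after the first instance of $b$; and for adjacent $b,b'\notin\mathcal{I}(O_Y)$ with $v\le_{O_Y} b'$ but $v\not\le_{O_Y} b$ the edge must point $b\to b'$. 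Consequently every interior non-$\mathcal{I}$ occurrence (one lying between two $\mathcal{I}$-clicks) is free to move in at least one direction: it can never be graph-adjacent to an $\mathcal{I}$-click on both of its sides, and whenever it is blocked on the left by an adjacent $\mathcal{I}$-click then all $\mathcal{I}$-clicks to its right are non-adjacent to it, and symmetrically.

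With these facts I would clear the interior by repeated bubbling. Taking the leftmost interior non-$\mathcal{I}$ occurrence $\beta$, the second step guarantees that everything between it and the $v$-click is an $\mathcal{I}$-click; if $\beta$ is non-adjacent to all of them I commute it left out of the interval, and otherwise the dichotomy shows every $\mathcal{I}$-click to its right is non-adjacent and I commute it right, using symmetrically that the stretch from the rightmost interior occurrence to the $w$-click is clear of other non-$\mathcal{I}$ clicks. After finitely many such commutations no non-$\mathcal{I}$ click remains interior, so the $\mathcal{I}$-clicks occupy a single interval $[p,q]$, giving $(i)$. The step I expect to be the main obstacle is exactly this termination and deadlock-freeness: showing that one can always route some interior occurrence completely out without being trapped by \emph{other} non-$\mathcal{I}$ clicks on its escape side. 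This is where the single-occurrence property of type-out vertices and the ``$b\to b'$'' orientation of adjacent pairs are essential, and I would close the argument with a monotone potential---for instance the sum over non-$\mathcal{I}$ occurrences of the smaller of the numbers of $\mathcal{I}$-clicks on its two sides---shown to strictly decrease under the move just described.
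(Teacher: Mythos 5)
Your setup is sound: reducing to elementary commutations of consecutive non-adjacent clicks is a valid move (it preserves both the legality of the sequence and its image), the observation that $v$ is the first and $w$ the last $\mathcal{I}(O_Y)$-click and that each vertex of $\mathcal{I}(O_Y)$ is clicked exactly once is correct, and the type-in/type-out dichotomy for vertices adjacent to the interval is a true and useful fact. But the proof has a genuine gap at exactly the point you flag yourself: you never establish that an interior non-$\mathcal{I}$ occurrence can actually be routed out of the interval. Non-adjacency to the $\mathcal{I}$-clicks on its escape side is not enough, because the stretch it must traverse can contain other non-$\mathcal{I}$ occurrences that are graph-adjacent to it (your ``leftmost occurrence'' trick clears the left exit but says nothing about the right exit, and the sentence invoking ``symmetrically'' the rightmost occurrence conflates the two cases). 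The proposed potential --- the sum over occurrences of the smaller number of $\mathcal{I}$-clicks on either side --- is neither precisely tied to a move you have shown to be performable nor shown to decrease; indeed one can set up configurations (e.g.\ a once-occurring vertex below the interval that must exit right, sitting to the left of the second occurrence of an adjacent vertex that lies above the interval) where the naive bubbling order deadlocks and a more careful scheduling is required. Since the entire content of the proposition is this rearrangement, the argument is incomplete as it stands.

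For comparison, the paper sidesteps the deadlock issue with a global decomposition rather than local bubbling. It partitions $\vset[Y]\setminus\mathcal{I}(O_Y)$ into the set $A$ of vertices with a directed path \emph{into} the interval, the set $B$ of vertices reachable \emph{from} it, and the remainder $C$ (note: defined by directed reachability, not mere adjacency as in your dichotomy). The key lemma is that whenever $c_i\in B$ and $c_j\in A$ occur with no other $A\cup B$ vertex between them, then at that stage $c_i,c_{i+1},\dots,c_j$ form an independent set of simultaneous sources and may be permuted freely; iterating this sorts all $A$-clicks before all $B$-clicks. One then slides the $A$-clicks in front of $v$ and the $B$-clicks behind $w$, and finally relocates each weakly connected component of $C$ wholesale to one side. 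If you want to salvage your approach, the missing ingredient is precisely such a scheduling argument --- specifying \emph{which} occurrence to extract, in \emph{which} order, with a proof that the chosen one is never blocked --- and you would likely find yourself reconstructing the $A$/$B$/$C$ decomposition to do it.
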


\begin{proof}
  We prove the proposition by constructing a desired click-sequence
  $\mathbf{c}''$ from $\mathbf{c}$ through a series of transpositions
  where each intermediate click-sequence $\mathbf{c}'$ satisfies
  $\mathbf{c}(O_Y) = \mathbf{c}'(O_Y)$. Such transpositions are said
  to have property $T$. 

  Let $I=\mathcal{I}(O_Y)$, and let $A$ be the set of vertices in $I^c
  = \vset[Y]\setminus I$ that lie on a directed path in $O_Y$ to a
  vertex in $I$ (vertices \emph{above} $I$), and let $B$ be the set of
  vertices that lie on a directed path in $O_Y$ from a vertex in $I$
  (vertices \emph{below} $I$). Let $C$ be the complement of $I\cup
  A\cup B$. Two vertices $c_i,c_j\in A\cup B$ with $i<j$ for which
  there is no element $c_k \in A\cup B$ with $i<k<j$ are said to be
  \emph{tight}. We will investigate when transpositions of tight
  vertices in a click-sequence $\mathbf{c}$ of $O_Y$ has property $T$,
  and we will see that this is always the case if $c_i\in B$ and
  $c_j\in A$.
  Consider the intermediate acyclic orientation after applying
  successive clicks $c_1c_2\cdots c_{i-1}$ to $O_Y$. Obviously, $c_i$
  is a source. At this point, if $c_j$ were not a source, then there
  would be an adjacent vertex $a\in A$ with the edge $\{a,c_j\}$
  oriented $(a,c_j)$. For $c_j$ to be clicked as usual (i.e., as a
  source), $a$ must be clicked first, but this would break the
  assumption that $c_i$ and $c_j$ are tight. Therefore, $c_i$ and
  $c_j$ are both sources at this intermediate step, and so the
  vertices $c_i,c_{i+1},\dots,c_j$ are an independent set of sources,
  and may be permuted in any manner without changing the image of the
  click sequence. Therefore, the transposition of $c_i$ and $c_j$ in
  $\mathbf{c}$ has property $T$, as claimed. By iteratively
  transposing tight pairs in $\mathbf{c}$, we can construct a
  click-sequence with the property that every vertex in $A$ comes
  before every vertex in $B$. In light of this, we may assume without
  loss of generality that $\mathbf{c}$ has this property.

  The next step is to show that we can move all vertices in $A$ before
  $v$, and all vertices in $B$ after $w$ via transpositions having
  property $T$.  Let $a$ be the first vertex in $A$ appearing after
  $v$ in the click sequence $\mathbf{c}$.  We claim that the
  transposition moving $a$ to the position directly preceding $v$ has
  property $T$. This is immediate from the observation that when $v$
  is to be clicked, $a$ is a source as well, by the definition of $A$,
  thus it may be clicked before $v$, without preventing subsequent
  clicks of vertices up until the original position of $a$. Therefore,
  we may one-by-one move the vertices in $A$ that are between $v$ and
  $w$, in front of $v$. An analogous argument shows that we may move
  the vertices in $B$ that appear before $w$ to a position directly
  following $w$. In the resulting click-sequence $\mathbf{c}'$, the
  only vertices between $v$ and $w$ are either in $I$ or $C$. The
  subgraph of the directed graph $O_Y$ induced by $C$ is a disjoint
  union of weakly connected components, and none of the vertices are
  adjacent to $I$. By definition of $A$ and $B$, there cannot exist a
  directed edge $(c,a)$ or $(b,c)$, where $a\in A$, $b\in B$, and
  $c\in C$. Thus for each weakly connected component of $C$, the
  vertices in the component can be moved within $\mathbf{c}'$,
  preserving their relative order, to a position either $(i)$ directly
  after the vertices in $A$ and before $v$, or $(ii)$ directly after
  $w$ and before the vertices of $B$. Call this resulting
  click-sequence $\mathbf{c}''$. As we just argued, all the
  transpositions occurring in the rearrangement
  $\mathbf{c}\mapsto\mathbf{c}''$ has property $T$, and
  $\mathbf{c}''$ contains all of the vertices in $I$ in consecutive
  order, and this proves the result. \qed
\end{proof}

We remark that the last two results together imply that for the
interval $[p,q]$ in the statement of
Proposition~\ref{prop:forcedclick}, $c_p=v$, $c_q=w$, and the sequence
$c_pc_{p+1}\cdots c_q$ contains every vertex in $\mathcal{I}(O_Y)$
precisely once. A simple induction argument implies the following.

\begin{corollary}
  \label{cor:intervals}
  Suppose that $O_Y\in\Acyc(Y)$ with $v\leq_{O_Y} w$, and let
  $\mathbf{c}=c_1c_2\cdots c_m$ be a click-sequence where $w$ appears
  $k$ times. Then there exists a click-sequence
  $\mathbf{c}'=c'_1c'_2\cdots c'_m$ such that $(i)$ there are $k$
  disjoint intervals $[p_i,q_i]$ of $\N$ such that
  $c_j\in\mathcal{I}(O_Y)$ iff $p_i\leq j\leq q_i$ for some $i$, and
  $(ii)$ $\mathbf{c}(O_Y)=\mathbf{c}'(O_Y)$.
\end{corollary}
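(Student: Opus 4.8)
The plan is to induct on the number $k$ of occurrences of $w$ in $\mathbf{c}$, peeling off one complete ``firing'' of the interval $\mathcal{I}(O_Y)$ at a time and invoking Proposition~\ref{prop:forcedclick}. The organizing observation is that Proposition~\ref{prop:I*} lets us keep the target interval fixed throughout the argument: whenever we click an initial segment of a click-sequence and arrive at an orientation $O'$ that is still $\kappa$-equivalent to $O_Y$ and still satisfies $v\leq_{O'}w$, we automatically have $\mathcal{I}(O')=\mathcal{I}(O_Y)$. Hence ``the'' interval never changes under the induction, and it suffices to track when the relation $v\leq w$ is maintained.

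For the inductive step, assume $k\geq 1$ and let $t$ be the position of the first $w$. The prefix $\mathbf{d}=c_1\cdots c_t$ is itself a click-sequence for $O_Y$ containing exactly one instance of $w$, and since $w=c_t$ is its last entry, no later occurrence of a vertex of $\mathcal{I}(O_Y)$; thus Proposition~\ref{prop:forcedclick} and the remark following it apply to $\mathbf{d}$, yielding a rearrangement $\mathbf{d}'=c'_1\cdots c'_t$ with $\mathbf{d}'(O_Y)=\mathbf{d}(O_Y)$ in which the vertices of $\mathcal{I}(O_Y)$ occupy a single block $[p,q]$ with $c'_p=v$, $c'_q=w$, each vertex occurring once. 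Replacing $\mathbf{d}$ by $\mathbf{d}'$ inside $\mathbf{c}$ gives an equivalent click-sequence. Put $O'=(c'_1\cdots c'_q)(O_Y)$. Since $v$ and $w$ are clicked exactly once each (only inside the block), the edge $\{v,w\}$ is reversed twice, so $v\leq_{O'}w$, and by the invariant above $\mathcal{I}(O')=\mathcal{I}(O_Y)$. The surviving clicks then form a click-sequence for $O'$ in which $w$ appears $k-1$ times, so the inductive hypothesis supplies $k-1$ disjoint $\mathcal{I}(O_Y)$-blocks among them; prepending $[p,q]$ produces $k$ blocks, with only non-interval vertices occurring before $p$, as required.

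The delicate point, which is also where the base case lives, concerns the clicks that survive \emph{after} the final $w$. By Proposition~\ref{prop:alternating} applied to the edge $\{v,w\}$, at most one new sweep of $\mathcal{I}(O_Y)$ can be initiated once the last $w$ has been clicked, so the leftover interval vertices constitute at most a single partial firing beginning with $v$. These cannot be split off as a separate block without inflating the count to $k+1$; instead they must be \emph{merged} onto the preceding block, directly after $c_q=w$, and I expect this merging to be the main obstacle. I would dispatch it exactly as in the proof of Proposition~\ref{prop:forcedclick}: because $\mathcal{I}(O')=\mathcal{I}(O_Y)$ and $v\leq_{O'}w$, any non-interval vertex lying between $w$ and a trailing interval vertex is, at the relevant intermediate orientation, a source simultaneously with that interval vertex, so the two may be transposed by a move of ``property $T$'' without altering the image of the click-sequence. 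Sliding all such intervening non-interval vertices past the trailing partial firing fuses it onto $[p,q]$ as one consecutive run, which settles the base case $k=1$ and closes the induction; the remaining checks—that prefixes of click-sequences are click-sequences and that distinct blocks remain disjoint—are routine bookkeeping.
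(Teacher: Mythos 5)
Your main induction is essentially the paper's: the paper also peels off the first complete firing of $\mathcal{I}(O_Y)$ (it cuts at the second occurrence of $v$ rather than at the first occurrence of $w$, which by Proposition~\ref{prop:alternating} is the same cut up to non-interval clicks), applies Proposition~\ref{prop:forcedclick} to the prefix, and invokes the induction hypothesis on the suffix. That part of your argument, including the use of $\nu$-invariance/Proposition~\ref{prop:I*} to keep the interval fixed and the parity observation giving $v\leq_{O'}w$, is sound.

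The gap is in your ``delicate point,'' and it is fatal as proposed. You correctly notice that Proposition~\ref{prop:forcedclick} assumes no interval vertices occur after the single $w$, that the corollary drops this hypothesis, and that any interval clicks surviving after the last $w$ would have to be fused onto the $k$-th block. But that fusion is impossible in general. Take $Y=K_3$ on $\{v,w,x\}$ with $O_Y=\{(v,w),(v,x),(w,x)\}$, so $\mathcal{I}(O_Y)=\{v,w\}$, and take $\mathbf{c}=v\,w\,x\,v$, which has $k=1$. At every stage this orientation has a unique source, so $\mathbf{c}$ is the \emph{only} click-sequence of length $4$ out of $O_Y$; its interval vertices sit at positions $1,2,4$ and cannot be made consecutive. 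Your transposition claim breaks because $x$ is a neighbor of $v$: by Proposition~\ref{prop:alternating} every neighbor of $v$ --- including non-interval ones --- must be clicked strictly between two consecutive clicks of $v$, and since non-interval vertices cannot lie inside the block $[p,q]$, they are forced to lie between $c'_q=w$ and the trailing $v$. Such a vertex is never a source ``simultaneously with'' the trailing $v$ in a commutable way; adjacency forbids reordering them. So the merging step fails whenever $v$ has a neighbor outside $\mathcal{I}(O_Y)$ and is clicked again after the last $w$, and the corollary as literally stated fails with it. (The paper's own proof silently has the same defect --- its base case is declared to be ``simply Proposition~\ref{prop:forcedclick}'' although the hypotheses do not match --- but in the one place the result is used, the proof of Theorem~\ref{thm:bijection}, the click-sequences end with $w$, so no trailing interval clicks occur. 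The honest repair is to carry over the hypothesis that no vertex of $\mathcal{I}(O_Y)$ appears after the last $w$, or to allow a $(k+1)$-st partial block; with either amendment your induction closes without the merging step.)
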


\begin{proof}
  The argument is by induction on $k$. When $k=1$, the statement is
  simply Proposition~\ref{prop:forcedclick}. Suppose the statement holds
  for all $k\leq N$, for some $N\in\N$, and let $\mathbf{c}$ be a
  click-sequence containing $N+1$ instances of $w$. Let $c_\ell$ be
  the second instance of $v$ in $\mathbf{c}$, and consider the two
  click-sequences $\mathbf{c}_i:=c_1c_2\cdots c_{\ell-1}$ and
  $\mathbf{c}_f:=c_\ell c_{\ell+1}\cdots c_m$. By
  Proposition~\ref{prop:forcedclick}, there exists an interval
  $[p_1,q_1]$ with $p_1<q_1<\ell$, and by the induction hypothesis, there
  exists $k$ intervals $[p_2,q_2],\dots,[p_{k+1},q_{k+1}]$ with
  $\ell\leq p_2<q_2<\cdots<p_{k+1}<q_{k+1}$ such that if
  $c_j\in\mathcal{I}(O_Y)$, then $p_i\leq j\leq q_i$ for some
  $i=1,\dots,k+1$. \qed
\end{proof}

Let $\varepsilon\colon\Acyc(Y)\longrightarrow\Acyc(Y')$ be the
restriction map that sends $O_Y$ to $O_{Y'}$. Clearly, this map
extends to a map $\varepsilon^*\colon \Acyc(Y)/\simkappa
\longrightarrow \Acyc(Y')/\simkappa$.
Define 
\begin{equation*}
  \ieext \colon \Acyc(Y')/\simkappa \longrightarrow\Acyc_\le(Y)
\end{equation*}
by $\ieext([O_{Y'}]) = \mathcal{I}(O^1_Y)$ for any $O^1_Y \in [O_Y]$ such
that $\varepsilon^*([O_Y]) = [O_{Y'}]$ with $\card{\mathcal{I}(O^1_Y)}\geq
3$, and $\ieext([O_{Y'}])=\{v,w\}$ if no such acyclic orientation
$O^1_Y$ exists.

\begin{proposition}
  \label{prop:diagram}
  The map $\ieext$ is well-defined, and the diagram
  \begin{equation*}
    \xymatrix{
      \Acyc(Y)/\simkappa
      \ar@{->}[rr]^{\mathcal{I}^*}\ar@{->}[d]_{\varepsilon^*}
      && \Acyc_\le(Y) \\ 
      \Acyc(Y')/\simkappa\ar@{-->}[urr]_{\ieext}
    }
  \end{equation*}
  commutes.
\end{proposition}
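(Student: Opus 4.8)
The plan is to prove the two assertions together by reducing both to a single invariance statement inside $Y'$. The starting observation is that, since $\{v,w\}$ is the only edge between $v$ and $w$ and it is absent from $Y'$, every directed $v$--$w$ path in $Y'$ has length at least two. Consequently, for any $O_Y$ with $v\le_{O_Y}\!w$ one has $\card{\mathcal{I}(O_Y)}\ge 3$ exactly when $v\le_{O_{Y'}}\!w$ in the restriction $O_{Y'}=\varepsilon(O_Y)$, and in that case $\mathcal{I}(O_Y)=[v,w]_{O_{Y'}}$ depends only on $O_{Y'}$. Both claims then follow once I establish the invariance statement $(\ast)$: if $O^1_{Y'}\tsimkappa O^2_{Y'}$ in $Y'$ and both satisfy $v\le w$, then $[v,w]_{O^1_{Y'}}=[v,w]_{O^2_{Y'}}$.

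First I would record the easy half of $(\ast)$: a click of a source $c\notin\{v,w\}$ in $Y'$ leaves the set of directed $v$--$w$ paths unchanged, since such a $c$ lies on no $v$--$w$ path and neither does any edge incident to it (a source can only begin, and a sink only end, a directed path). Hence these clicks preserve both the comparability of $v,w$ and the interval $[v,w]$. The entire content of $(\ast)$ therefore lies in the clicks of $v$ and of $w$, which are precisely the moves that can alter whether $v\le w$, so the problem becomes one of controlling the excursions of a click-sequence out of the region $\{v\le w\}$.

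To handle these, I would lift to $Y$ and invoke Proposition~\ref{prop:I*}. Extend the two witnesses by orienting $e=(v,w)$ to obtain $O^1_Y,O^2_Y\in\Acyc(Y)$ with $v\le w$ and $\mathcal{I}(O^i_Y)=[v,w]_{O^i_{Y'}}$; it then suffices to prove $O^1_Y\tsimkappa O^2_Y$ in $Y$, for Proposition~\ref{prop:I*} gives $\mathcal{I}(O^1_Y)=\mathcal{I}(O^2_Y)$ at once. A $\kappa$-equivalence in $Y'$ is a click-sequence $\mathbf{c}$, which I would lift to $Y$ by carrying along the edge $e$, always oriented so as to keep the orientation acyclic (forced to $(v,w)$ when $v\le w$ and to $(w,v)$ when $w\le v$). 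A click of $c\notin\{v,w\}$ lifts verbatim, and a $v$- or $w$-click made at the boundary of $\{v\le w\}$ lifts to the same click in $Y$, simultaneously flipping $e$ in the correct direction. The main difficulty, and the step I expect to be the real obstacle, is a $v$-click (or $w$-click) performed while $v,w$ are already incomparable or reversed: there $e$ points into $v$, so $v$ is no longer a source of the lifted orientation and the naive lift stalls.

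To remove this obstruction I would normalize $\mathbf{c}$ by means of Proposition~\ref{prop:forcedclick} and Corollary~\ref{cor:intervals}, which let me assume the interval-vertices are clicked in consecutive blocks, one per instance of $w$, so that every excursion out of $\{v\le w\}$ is bracketed by a single $v$-click (leaving the region) and a single $w$-click (returning), with only clicks of $c\notin\{v,w\}$ in between; each bracketed excursion then lifts cleanly to a click-sequence in $Y$ in which $e$ returns to $(v,w)$, yielding $O^1_Y\tsimkappa O^2_Y$ and hence $(\ast)$. Well-definedness of $\ieext$ is then immediate. For commutativity I would take a representative $O_Y$ of $[O_Y]$ with $v\le w$: if $\card{\mathcal{I}(O_Y)}\ge 3$ then $O_Y$ is itself an admissible witness, so $\ieext(\varepsilon^*([O_Y]))=\mathcal{I}(O_Y)=\mathcal{I}^*([O_Y])$; and if $\card{\mathcal{I}(O_Y)}=2$ it remains to verify, again via the click-analysis above, that no orientation in the fibre $\varepsilon^{*-1}(\varepsilon^*([O_Y]))$ acquires an interval of size $\ge 3$, so that $\ieext$ returns $\{v,w\}=\mathcal{I}^*([O_Y])$. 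Throughout, the click-sequence surgery that legitimizes the lift to $Y$ is the crux of the argument.
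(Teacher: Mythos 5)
Your opening reductions are sound: clicks of vertices outside $\{v,w\}$ indeed preserve the set of directed $vw$-paths in $Y'$, and the well-definedness of $\ieext$ does come down to your statement $(\ast)$, which is exactly the content the paper establishes. The gap is at the point you yourself flag as ``the real obstacle,'' and your proposed repair does not work. You invoke Proposition~\ref{prop:forcedclick} and Corollary~\ref{cor:intervals} to normalize a click-sequence \emph{over $Y'$} into excursions bracketed by a single $v$-click and a single $w$-click. But those results are proved for click-sequences over $Y$, where $\{v,w\}\in\eset[Y]$; their proofs rest on Proposition~\ref{prop:alternating} applied to the edge $(v,w)$, which is what forces occurrences of $v$ and $w$ to alternate. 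In $Y'$ the vertices $v$ and $w$ are not adjacent, so a click-sequence may click $v$ several times before $w$ is clicked at all; the bracketing structure fails, and the naive lift of $e$'s orientation stalls exactly as you feared. Moreover, the intermediate claim you reduce to --- that $O^1_{Y'}\tsimkappa O^2_{Y'}$ forces $O^1_Y\tsimkappa O^2_Y$ when both intervals have size at least $3$ --- is essentially the injectivity of $\Theta$ on the $\Acyc(Y')/\simkappa$ part, which the paper only obtains in Theorem~\ref{thm:bijection} \emph{using} this proposition as input. So your route proves (or rather, would need to prove) a strictly harder downstream statement first.

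The paper's proof needs no lift to $Y$ at all: it applies the invariant $\nu_C$ to closed walks $C=P_1P_2^{-1}$ built from pairs of $vw$-paths. Since $\nu_C$ is constant on $\kappa$-classes of $Y'$, a short computation (split according to whether the two orientations share a $vw$-path) shows that two $\kappa$-equivalent orientations of $Y'$ that each admit a $vw$-path admit exactly the same $vw$-paths; this is $(\ast)$, and it never uses the $v$- and $w$-clicks explicitly. That invariant is the ingredient missing from your argument. One further caution on your commutativity step: the sub-claim you defer in the $\card{\mathcal{I}(O_Y)}=2$ case --- that no orientation in the fibre of $\varepsilon^*$ has interval of size $\ge 3$ --- is false in general (take $Y=K_3$ with $e=\{v,w\}$: both $\kappa$-classes of $Y$ map to the single $\kappa$-class of the path $Y'$, one with interval $\{v,w\}$ and one with interval of size $3$). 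The diagram can only be asserted to commute on classes with $\card{\mathcal{I}^*}\ge 3$, which is all that is used subsequently, so you should not spend effort trying to verify that step as stated.
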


\begin{proof}
  Let $[O_{Y'}] \in\Acyc(Y')/\simkappa$. If there is at most one
  orientation $O_{Y}\in\Acyc(Y)$ such that
  $\card{\mathcal{I}(O_Y)}\geq 3$ and $\varepsilon(O_Y) \in [O_{Y'}]$,
  or if all orientations of the form $O_Y^1$ in the definition of
  $\ieext$ are $\kappa$-equivalent, then both statements of the
  proposition are clear. Assume therefore that there are acyclic
  orientations $O_Y^{\pi}, O_Y^{\sigma} \in \Acyc(Y)$ with
  $O_Y^\pi\nkeq O_Y^\sigma$, but
  $\eta^*_e([O_Y^\pi])=\eta^*_e([O_Y^\sigma])$ and
  $\card{\mathcal{I}(O_Y^\pi)},\card{\mathcal{I}(O_Y^\sigma)}\ge 3$.
  It suffices to prove that in this case,
  \begin{equation}
    \label{eq:I_pi=I_sigma}
    \mathcal{I}(O^\pi_Y)=\mathcal{I}(O^\sigma_Y)\;.
  \end{equation}
  This is equivalent to showing that the set of $vw$-paths (directed
  paths from $v$ to $w$) in $O^\pi_{Y'}$ is the same as the set of
  $vw$-paths in $O^\sigma_{Y'}$. From this it will also follow that
  the diagram commutes. By assumption, both of these orientations
  contain at least one $vw$-path. We will consider separately the
  cases when these orientations share or do not share a common
  $vw$-path.
  
  \textit{Case 1: $O_{Y'}^\pi$ and $O_{Y'}^\sigma$ share no common
    $vw$-path.} Let $P_1$ be a length-$k_1$ $vw$-path in $O_{Y'}^\pi$,
  and let $P_2$ be a length-$k_2$ $vw$-path in
  $O_{Y'}^\sigma$. Suppose that in $O_{Y'}^\pi$ there are $k_2^+$
  edges along $P_2$ oriented from $v$ to $w$, and $k_2^-$ edges
  oriented from $w$ to $v$. Likewise, suppose that in $O_{Y'}^\sigma$
  there are $k_1^+$ edges along $P_1$ oriented from $v$ to $w$, and
  $k_1^-$ edges oriented from $w$ to $v$. If $C=P_1P_2^{-1}$ (the
  cycle formed by traversing $P_1$ followed by $P_2$ in reverse), then
  \begin{equation*}
    \nu_C(O_{Y'}^\pi)= k_1^++k_1^-+k_2^--k_2^+,\qquad
    \nu_C(O_{Y'}^\sigma)= k_1^+-k_1^--k_2^--k_2^+\;.
  \end{equation*}
  Equating these values yields $k_1^-+k_2^-=0$, and since these are
  non-negative integers, $k_1^-=k_2^-=0$. We conclude that $P_1$ is a
  $vw$-path in $O_{Y'}^\sigma$ and $P_2$ is a $vw$-path in
  $O_{Y'}^\pi$, contradicting the assumption that $O_{Y'}^\pi$ and
  $O_{Y'}^\sigma$ share no common $vw$-paths.

  \textit{Case 2: $O_{Y'}^\pi$ and $O_{Y'}^\sigma$ share a common
    $vw$-path $P_1$, say of length $k_1$}. If these are the only
  $vw$-paths, we are done. Otherwise, assume without loss of generality
  that $P_2$ is another $vw$-path in $O_{Y'}^\pi$, say of length
  $k_2$. Then if $C=P_1P_2^{-1}$, we have $\nu_C(O_{Y'}^\pi)=k_1-k_2$,
  and hence $\nu_C(O_{Y'}^\sigma)=k_1-k_2$. Therefore, $P_2$ is a
  $vw$-path in $O_{Y'}^\sigma$ as well. Because $P_2$ was arbitrary,
  we conclude that $O_{Y'}^\pi$ and $O_{Y'}^\sigma$ share the same set
  of $vw$-paths. Since Case 1 is impossible, we have established
  \eqref{eq:I_pi=I_sigma}, and the proof is complete. \qed
\end{proof}

Let $O_Y\in\Acyc(Y)$ and assume $I = \mathcal{I}(O_Y)$ has at least
two vertices. We write $Y_I$ for the graph formed from $Y$ by
contracting all vertices in $I$ to a single vertex denoted $V_I$. If
$I$ only contains $v$ and $w$ then $Y_I = Y''_e$. Moreover, $O_Y$
gives rise to an orientation $O_{Y_I}$ of $Y_I$, and this orientation
is clearly acyclic.

\begin{proposition}
  \label{prop:Y_I}
  Let $O^1_Y, O^2_Y \in \Acyc(Y)$ and assume $\mathcal{I}(O^1_Y) =
  \mathcal{I}(O^2_Y)$. If $O^1_Y \nkeq O^2_Y$ then $[O^1_{Y_I}] \nkeq
  [O^2_{Y_I}]$.
\end{proposition}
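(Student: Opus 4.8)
The plan is to prove the contrapositive: assuming $[O^1_{Y_I}]=[O^2_{Y_I}]$ in $\Acyc(Y_I)/\simkappa$, I will show $O^1_Y\tsimkappa O^2_Y$. Fix a click-sequence $\mathbf{d}=d_1d_2\cdots d_\ell$ in $Y_I$ with $\mathbf{d}(O^1_{Y_I})=O^2_{Y_I}$, and lift it to a click-sequence $\mathbf{c}$ in $Y$ by the rule: each letter $d_i\ne V_I$ is copied verbatim (the same vertex is clicked in $Y$), while each letter $d_i=V_I$ is replaced by a \emph{block} consisting of all vertices of $I$ clicked once in a linear extension of the interval poset $\mathcal{I}(O^1_Y)=I$. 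I then check that $\mathbf{c}$ is a legal click-sequence for $O^1_Y$ and that $\mathbf{c}(O^1_Y)=O^2_Y$.

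First I record a structural fact that makes the lift coherent. If $u\in\vset[Y]\setminus I$ were adjacent in $Y$ to vertices $x,y\in I$ with $O_Y(\{y,u\})=(y,u)$ and $O_Y(\{u,x\})=(u,x)$, then concatenating a $v\to y$ path, the edges $y\to u\to x$, and an $x\to w$ path would exhibit $u$ on a $vw$-path, forcing $u\in I$, a contradiction. Hence every external vertex has all of its edges to $I$ oriented the same way; this is exactly what guarantees that $O_{Y_I}$ is well-defined and acyclic, and it means that clicking $u$ in $Y_I$ (where $u$ is incident to the single edge $\{u,V_I\}$) corresponds faithfully to clicking $u$ in $Y$.

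Next comes the heart of the argument, the \emph{block-click lemma}. Suppose that at some stage $V_I$ is a source in the current orientation of $Y_I$, equivalently that every $I$--$I^c$ edge in the corresponding orientation of $Y$ points out of $I$. Clicking the vertices of $I$ once each in a linear extension of the interval poset is then legal: when a vertex $x$ is reached, all poset-smaller vertices have already been clicked, so every internal edge into $x$ has been reversed to point outward, while its external edges still point outward, making $x$ a genuine source of $Y$. Because each internal edge of $I$ is reversed exactly twice (once at each endpoint) and each $I$--$I^c$ edge exactly once, the net effect of the block is to leave the internal orientation of $I$ unchanged while reversing every $I$--$I^c$ edge; this is precisely the effect of clicking $V_I$ in $Y_I$, and it is the block-click phenomenon already seen in Proposition~\ref{prop:forcedclick}.

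Assembling these pieces, I track two invariants along $\mathbf{c}$: the contraction of the current orientation equals the orientation produced by the matching prefix of $\mathbf{d}$ (so the final contraction is $O^2_{Y_I}$), and the internal orientation of $I$ is never altered, since external clicks touch only edges incident to $I^c$ and each block-click preserves the interior. Writing $O^3_Y=\mathbf{c}(O^1_Y)$, the orientation $O^3_Y$ therefore has the same interior orientation on $I$ as $O^1_Y$ (hence as $O^2_Y$, by the hypothesis $\mathcal{I}(O^1_Y)=\mathcal{I}(O^2_Y)$) and the same contraction $O^2_{Y_I}$ as $O^2_Y$; since the uniform-direction fact lets one recover all $I$--$I^c$ edges from the orientation of $\{u,V_I\}$, the interior orientation together with $O_{Y_I}$ determine an orientation of $Y$ uniquely, whence $O^3_Y=O^2_Y$ and $O^1_Y\tsimkappa O^2_Y$. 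I expect the main obstacle to be the block-click lemma itself: verifying that clicking the interval in a linear extension is legal at every step (each vertex a genuine source of $Y$, not merely of the induced suborientation on $I$) and that its net effect matches a single click of $V_I$. The parity bookkeeping is routine once legality is in place, and the uniform-direction observation is what ties the two graphs together throughout.
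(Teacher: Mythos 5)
Your proof is correct and follows essentially the same route as the paper's: both argue the contrapositive and lift a click-sequence from $Y_I$ to $Y$ by expanding each occurrence of $V_I$ into a linear extension of the interval poset $I$. The paper's version simply asserts that the lifted sequence works, whereas you supply the supporting details (the uniform orientation of external edges into $I$, the legality of the block-click, and the bookkeeping showing its net effect matches a single click of $V_I$); these are exactly the verifications the paper leaves implicit.
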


\begin{proof}
  We prove the contrapositive statement. Set $I=\mathcal{I}(O^1_Y)$,
  suppose $\card{I} = k$, and let $v_1v_2\cdots v_k$ be a linear
  extension of $I$. For any click-sequence $\mathbf{c}_I$ between two
  acyclic orientations $O^1_{Y_I}$ and $O^2_{Y_I}$ in $\Acyc(Y_I)$,
  let $\mathbf{c}$ be the click-sequence formed by replacing every
  occurrence of $c_i = V_I$ in $\mathbf{c}_I$ by the sequence
  $v_1\cdots v_k$. Then $\mathbf{c}(O^1_Y) = O^2_Y$ and $O^1_Y
  \tsimkappa O^2_Y$ as claimed. \qed
\end{proof}


\section{Proof of Theorem~\ref{thm:kappa}}
\label{sec:bijection}

In this section, we will utilize the results in the previous section
to establish a bijection from $\Acyc(Y)/\simkappa$ to
$\bigl(\Acyc(Y'_e)/\simkappa\cup\,\,\Acyc(Y''_e)/\simkappa\bigr)$ for
any cycle-edge $e$, which will in turn imply Theorem~\ref{thm:kappa}.

For $[O_Y]\in\Acyc(Y)/\simkappa$, let $O^\pi_Y$ denote an orientation
in $[O_Y]$ such that $\pi=v\pi_2\cdots\pi_n$ and $w=\pi_i$ for $i$
minimal. Define the map
\begin{equation}
  \Theta\colon\Acyc(Y)/\!\!\sim_\kappa\,\longrightarrow
  \big(\Acyc(Y')/\!\!\sim_\kappa\!\big)\,\bigcup\,
  \big(\Acyc(Y'')/\!\!\sim_\kappa\!\big)\; 
\end{equation}
by
\begin{equation}
  [O_Y]\stackrel{\Theta}{\longmapsto}
\begin{cases} 
  [O^\pi_{Y''}], &
  \mbox{$\exists O^{\pi}_Y\in[O_Y]$ with $\pi=vw\pi_3\cdots\pi_n$} \\ 
  [O^\pi_{Y'}], & \mbox{otherwise.} 
  \end{cases}
\end{equation}

Note that $[O_Y]$ is mapped into $\Acyc(Y'')/\simkappa$ if and only
if the only vertices in $\ieext([O_Y])$ are $v$ and $w$. Since
$\kappa$-equivalence over $Y$ implies $\kappa$-equivalence over $Y'$,
$\Theta$ does not depend on the choice of $\pi$, and is thus
well-defined. The results we have derived for the $vw$-interval now
allow us to establish the following:
\begin{theorem}
  \label{thm:bijection}
  The map $\Theta$ is a bijection. 
\end{theorem}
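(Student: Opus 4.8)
The plan is to prove that $\Theta$ is a bijection by splitting the domain according to the size of the $vw$-interval and analysing the two branches of $\Theta$ separately. Since $\{v,w\}\in\eset[Y]$ and every $\kappa$-class contains an orientation with $v\leq_{O_Y}\!w$, Proposition~\ref{prop:I*} assigns to each class a $vw$-interval $\mathcal{I}^*([O_Y])$ containing $\{v,w\}$, so I partition $\Acyc(Y)/\simkappa$ into the set $A''$ of classes with $\mathcal{I}^*([O_Y])=\{v,w\}$ and the set $A'$ of classes with $\card{\mathcal{I}^*([O_Y])}\geq 3$. By the remark following the definition of $\Theta$, the classes in $A''$ are exactly those sent into $\Acyc(Y'')/\simkappa$, while those in $A'$ are sent into $\Acyc(Y')/\simkappa$ via $\varepsilon^*$. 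It therefore suffices to show that $\Theta$ restricts to a bijection $A''\to\Acyc(Y'')/\simkappa$ and to a bijection $A'\to\Acyc(Y')/\simkappa$.

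For the $Y''$-branch every class has interval $I=\{v,w\}$, so $Y_I=Y''$ and $\Theta$ is the contraction $[O_Y]\mapsto[O_{Y''}]$. Injectivity is immediate from Proposition~\ref{prop:Y_I}: if $O^1_Y,O^2_Y\in A''$ have $O^1_{Y''}\tsimkappa O^2_{Y''}$, then since $\mathcal{I}(O^1_Y)=\mathcal{I}(O^2_Y)=\{v,w\}$ the contrapositive of that proposition forces $O^1_Y\tsimkappa O^2_Y$. For surjectivity, given a class in $\Acyc(Y'')/\simkappa$ I would take a linear extension of a representative, reinsert $v,w$ consecutively (with $v$ immediately before $w$) in place of the contracted vertex, and read off the induced orientation $O_Y$; because $w$ follows $v$ immediately, the $vw$-interval of $O_Y$ is exactly $\{v,w\}$, so $[O_Y]\in A''$, and by construction $\Theta([O_Y])$ is the chosen class.

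On $A'$ the map is $\varepsilon^*$, and the structural results of Section~\ref{sec:posets} do the work for injectivity. Suppose $O^1_Y,O^2_Y\in A'$ satisfy $O^1_{Y'}\tsimkappa O^2_{Y'}$. First, the argument inside the proof of Proposition~\ref{prop:diagram} (comparing $vw$-paths through the cycle invariants $\nu_C$) shows that the two orientations share the same set of $vw$-paths and hence a common interval $I=\mathcal{I}(O^1_Y)=\mathcal{I}(O^2_Y)$. Since $\{v,w\}\subseteq I$, contracting $I$ in $Y$ and contracting $I$ in $Y'$ (which discards the loop $e$) yield the same graph $Y_I$, and $O^i_{Y_I}$ is the $I$-contraction of the $\kappa$-equivalent pair $O^1_{Y'},O^2_{Y'}$. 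I would then transport this equivalence to $Y_I$ by arranging, as in Proposition~\ref{prop:forcedclick} and Corollary~\ref{cor:intervals}, for the vertices of $I$ to be clicked in consecutive blocks, each block collapsing to a single click of the contracted vertex $V_I$; this gives $O^1_{Y_I}\tsimkappa O^2_{Y_I}$, whence Proposition~\ref{prop:Y_I} yields $O^1_Y\tsimkappa O^2_Y$.

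The surjectivity of $\varepsilon^*\colon A'\to\Acyc(Y')/\simkappa$ is where I expect the main obstacle to lie. Its image consists precisely of those classes $[O_{Y'}]$ with $\ieext([O_{Y'}])\neq\{v,w\}$, that is, those admitting a representative with a directed $v\to w$ path (lifting such a representative by orienting $e$ as $(v,w)$ produces an acyclic orientation of interval size $\geq 3$ mapping to it). The crux is therefore to prove that \emph{every} $\kappa$-class of $Y'$ contains such a representative. Because $e$ is a cycle-edge, $v$ and $w$ lie on a common cycle and hence in the same component of $Y'$, and I would establish the stronger statement that each class contains an orientation in which $v$ is the unique source, so that $w$, like every vertex, is reachable from $v$. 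To produce it, start from a representative in which $v$ is a source (obtained by cyclically shifting $v$ to the front) and repeatedly click sources other than $v$, which keeps $v$ a source since distinct sources are non-adjacent. Writing $c(u)$ for the number of times $u$ has been clicked, a vertex can be clicked only while its click-count does not exceed that of each neighbour, and since $c(v)\equiv 0$ an induction on distance gives $c(u)\le\operatorname{dist}(u,v)$ throughout; hence the procedure halts, necessarily at an orientation whose only source is $v$. Assembling the two branches then shows that $\Theta$ is a bijection.
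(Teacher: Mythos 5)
Your proof is correct, and for most of its length it coincides with the paper's: the partition of $\Acyc(Y)/\simkappa$ by the size of the $vw$-interval, the reinsertion argument for surjectivity onto $\Acyc(Y'')/\simkappa$, and both injectivity arguments (Proposition~\ref{prop:Y_I} for the contraction branch; Propositions~\ref{prop:diagram} and~\ref{prop:forcedclick} plus block-collapsing for the deletion branch) are exactly the paper's steps, though you are slightly more careful in noting that contracting $I$ in $Y$ and in $Y'$ yields the same graph $Y_I$. Where you genuinely depart is surjectivity onto $\Acyc(Y')/\simkappa$. The paper splits into cases: when $[O_{Y'}]$ has a representative with a directed $v\to w$ path it lifts that representative with $e$ oriented $(v,w)$; when it has none, it fixes an undirected $vw$-path $P'$, picks the representative maximizing $\nu_{P'}$, lifts with $e$ oriented $(w,v)$, and excludes $\kappa$-equivalence to any $O^\sigma_Y$ with $\sigma=vw\sigma_3\cdots\sigma_n$ by a $\nu_P$ computation. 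You instead show the second case is vacuous: since $Y'$ is connected (because $e$ is a cycle-edge), every $\kappa$-class of $\Acyc(Y')$ contains an orientation whose unique source is $v$ --- repeatedly click sources other than $v$, with termination guaranteed because the alternation property of Proposition~\ref{prop:alternating} bounds the number of clicks of $u$ by the distance from $u$ to $v$ --- and in such an orientation $w$ is reachable from $v$. This is a valid and arguably cleaner replacement for the maximality argument; it also yields as a byproduct the existence half of the correspondence between $\kappa$-classes and acyclic orientations with a unique source at a fixed vertex, both of which are counted by $T_Y(1,0)$ (cf.\ the discussion of~\cite{Gioan:07} in Section~\ref{sec:summary}). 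The trade-off is that the paper's route stays entirely within the cycle-invariant machinery it has already built, whereas yours imports a termination bound; both are sound.
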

\begin{proof}
  We first prove that $\Theta$ is surjective. Let $I=\{v,w\}$ and
  consider an element $[O_{Y''}]\in\Acyc(Y'')/\!\sim_\kappa$ with
  $O^{\pi}_{Y''} \in [O_{Y''}]$ where $\pi =
  V_I\pi_2\cdots\pi_{n-1}$. Let $\pi^+=vw\pi_2\cdots\pi_{n-1} \in
  S_Y$.  Clearly $[O^{\pi^+}_Y] \in \Acyc(Y)/\!\sim_\kappa$ is mapped
  to $[O_{Y''}]$ by $\Theta$.
  
  Next, consider an element $[O_{Y'}] \in \Acyc(Y')/\!\sim_\kappa$. If
  there is no element $O^\pi_{Y'}$ of $[O_{Y'}]$ such that $\pi =
  vw\pi_3\cdots\pi_n$, then no elements of $[O_Y]$ are of this form
  either, and by definition $[O_{Y'}]$ has a preimage under
  $\Theta$. 
  We are left with the case where $[O_{Y'}]$ contains an element
  $O^\pi_{Y'}$ such that $\pi=vw\pi_3\cdots\pi_n$, and we must show
  that there exists $O^{\pi'}_{Y'}\in[O_{Y'}]$ such that
  $[O^{\pi'}_Y]$ contains no element of the form $O^\sigma_Y$ with
  $\sigma = vw\sigma_3\cdots\sigma_n$.
  Note that if $\sigma = vw\sigma_3\cdots\sigma_n$, then the vertices
  in $\mathcal{I}(O^\sigma_Y)$ are precisely $v$ and $w$. If the
  orientation $O_{Y'}$ had a directed path from $v$ to $w$, then the
  corresponding orientation $O_Y\in\Acyc(Y)$ formed by adding the edge
  $e$ with orientation $(v,w)$ has $vw$-interval of size at least 3,
  so by Proposition~\ref{prop:I*}, the acyclic orientation $O_Y$
  cannot be $\kappa$-equivalent to any orientation $O^\sigma_Y$ such
  that $\sigma=vw\sigma_3\cdots\sigma_n$.
  
  Thus it remains to consider the case when $[O_{Y'}]$ contains no
  acyclic orientation with a directed path from $v$ to $w$. Pick any
  simple undirected path $P'$ from $v$ to $w$ in $Y'$, which exists
  because $e$ is a cycle-edge. Choose an orientation in $[O_{Y'}]$ for
  which $\nu_{P'}$ is maximal. Without loss of generality we may
  assume that $O_{Y'}$ is this orientation.
  Let $O_Y\in\Acyc(Y)$ be the orientation that agrees with $O_{Y'}$,
  and with $e$ oriented as $(w,v)$. Since we have assumed that there
  is no directed path from $v$ to $w$ this orientation is acyclic. We
  claim that for any $\sigma=vw\sigma_3\cdots\sigma_n$ one has
  $O^\sigma_Y \not\in [O_Y]$. To see this, assume the statement is
  false. Let $P$ be the undirected cycle in $Y$ formed by adding the
  edge $e$ to the path $P'$. Because $e$ is oriented as $(v,w)$ in
  $O^\sigma_Y$ and as $(w,v)$ in $O_Y$, we have $\nu_P(O^\sigma_Y) =
  \nu_{P'}(O^\sigma_{Y'})-1$ and $\nu_P(O_Y)=\nu_{P'}(O_{Y'})+1$. If
  $O_Y$ and $O^\sigma_Y$ were $\kappa$-equivalent, then
  \begin{equation*}
    \nu_{P'}(O^\sigma_{Y'})-1=\nu_P(O^\sigma_Y)=\nu_P(O_Y)=\nu_{P'}(O_Y)+1\;,
  \end{equation*}
  and thus $\nu_{P'}(O^\sigma_{Y'})=\nu_{P'}(O_Y)+2$. Any click
  sequence mapping $O_Y$ to $O^\sigma_Y$ is a click-sequence from
  $O_{Y'}$ to $O^\sigma_{Y'}$. Therefore, $O^\sigma_{Y'} \in
  [O_{Y'}]$, which contradicts the maximality of $\nu_{P'}(O_{Y'})$. We
  therefore conclude that $O^\sigma_Y \not\in [O_Y]$, that
  $\Theta([O_Y]) = [O_{Y'}]$, and hence that $\Theta$ is surjective.
  
\medskip

  We next prove that $\Theta$ is an injection. By
  Proposition~\ref{prop:Y_I} (with $I=\{v,w\}$), $\Theta$ is injective
  when restricted to the preimage of $[O_{Y''}]$ under $\Theta$. Thus
  it suffices to show that every element in $\Acyc(Y')/\!\sim_\kappa$
  has a unique preimage under $\Theta$.
  By Proposition~\ref{prop:diagram}, every preimage of $[O_{Y'}]$ must
  have the same $vw$-interval $I$, containing $k>2$ vertices. Suppose
  there were preimages $[O_Y^\pi]\neq[O_Y^\sigma]$ of $[O_{Y'}]$.  By
  Proposition~\ref{prop:Y_I}, it follows that $O^\pi_{Y_I}\nkeq
  O^\sigma_{Y_I}$. We will now show that this leads to a
  contradiction.

  Assume that $\mathbf{c}=c_1\cdots c_m$ is a click-sequence from
  $O_{Y'}^\pi$ to $O_{Y'}^\sigma$. If one of $\pi$ or $\sigma$ is not
  $\kappa$-equivalent to a permutation with vertices $v$ and $w$ in
  succession, then their corresponding $\kappa$-classes would be
  unchanged by the removal of edge $e$. In light of this, we may
  assume that $\pi = v\pi_2\dots\pi_{n-1}w$ and $\sigma = v\sigma_2
  \dots \sigma_{n-1}w$, and thus that $c_1 = v$ and $c_m = w$. By
  Proposition~\ref{prop:forcedclick}, we may assume that the vertices
  in $I$ appear in $\mathbf{c}$ in some number of disjoint consecutive
  ``blocks,'' i.e., subsequences of the form $c_i\cdots c_{i+k-1}$.
  Replacing each of these blocks with $V_I$ yields a click-sequence
  from $O^\pi_{Y_I}$ to $O^\sigma_{Y_{I}}$, contradicting the fact
  that $O^\pi_{Y_I}\nkeq O^\sigma_{Y_I}$. Therefore, no such click
  sequence $\mathbf{c}$ exists, and $\Theta$ must be an injection, and
  the proof is complete. \qed
\end{proof}

Clearly, Theorem~\ref{thm:bijection} implies Theorem~\ref{thm:kappa}.
It is also interesting to note that the bijection $\beta_e \colon
\Acyc(Y) \longrightarrow \Acyc(Y'_e) \cup \Acyc(Y''_e)$
in~\eqref{eq:bijection2} does \emph{not} extend to a map on
$\kappa$-classes.


\section{Discrete Dynamical Systems, Coxeter Groups, Node-firing
  Games, and Quiver Representations}
\label{sec:summary}

We conclude with a brief discussion of how the equivalence relations
studied in this paper arise in various areas of mathematics. The
original motivation came from the authors' interest in
\emph{sequential dynamical systems} (\sdss). The equivalence relation
$\sim_Y$ arises naturally in the study of functional equivalence of
these systems. This can be seen as follows. Given a graph $Y$ with
vertex set $\{1,2,\ldots,n\}$ as above, a state $x_v\in K$ is assigned
to each vertex $v$ of $Y$ for some finite set $K$.  The \emph{system
state} is the tuple consisting of all the vertex states, and is
denoted $x = (x_1,\ldots,x_n)\in K^n$. The sequence of states
associated to the $1$-neighborhood $B_1(v;Y)$ of $v$ in $Y$ (in some
fixed order) is denoted $x[v]$. A sequence of vertex functions
$(f_i)_i$ with $f_i \colon K^{d(i)+1} \longrightarrow K$ induces
$Y$-local functions $F_i \colon K^n \longrightarrow K^n$ of the form
\begin{equation*}
  F_i(x_1,\ldots,x_n) = 
  (x_1,\ldots,x_{i-1}, f_i(x[i]), x_{i+1}, \ldots, x_n)\;.
\end{equation*}
The sequential dynamical system map with update order $\pi = (\pi_i)_i
\in S_Y$ is the function composition
\begin{equation}
  [\Fy,\pi] = F_{\pi(n)} \circ F_{\pi(n-1)} \circ \cdots \circ
  F_{\pi(2)} \circ F_{\pi(1)} \;.
\end{equation}
By construction, if $\pi\sim_Y\pi'$ holds then $[\Fy,\pi]$ and
$[\Fy,\pi']$ are identical as functions. Thus, $\acyc(Y)$ is an upper
bound for the number of functionally non-equivalent \sds maps that can
be generated over the graph $Y$ for a fixed sequence of vertex
functions. For any graph $Y$, there exist $Y$-local functions for
which this bound is sharp~\cite{Mortveit:01a}. A weaker form of
equivalence is \textit{cycle equivalence}, which means that the
dynamical system maps are conjugate (using the discrete topology) when
restricted to their sets of periodic points. In the language of graph
theory, this means their periodic orbits are isomorphic as directed
graphs.
The following result shows how $\kappa$- and $\delta$-equivalent
update orders yield dynamical system maps that are cycle equivalent.
\begin{theorem}
  \label{thm:shift}
  For any finite set $K$ of vertex states, and for any $\pi\in S_Y$,
  the \sds maps $[\Fy,\pi]$ and $[\Fy,\Shift_s(\pi)]$ are cycle
  equivalent. For vertex states $K=\F_2$ the \sds maps $[\Fy,\pi]$ and
  $[\Fy,\rev(\pi)]$ are cycle equivalent as well.
\end{theorem}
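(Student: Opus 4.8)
The plan is to prove the two statements by separate arguments, both reducing cycle equivalence to an isomorphism of the digraphs formed by the periodic orbits of the two \sds maps. Recall that $[\Fy,\pi]=F_{\pi_n}\circ\cdots\circ F_{\pi_1}$, so $F_{\pi_1}$ is applied first. For the shift statement I would first reduce to the case $s=1$, since $\Shift_s=\Shift_1^{\,s}$ and cycle equivalence is transitive.

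For $s=1$, write $h=F_{\pi_1}$ and $g=F_{\pi_n}\circ\cdots\circ F_{\pi_2}$, so that $[\Fy,\pi]=g\circ h$ while $[\Fy,\Shift_1(\pi)]=h\circ g$. The key is the elementary fact, valid for self-maps of \emph{any} finite set and hence for any $K$, that $g\circ h$ and $h\circ g$ are conjugate on their periodic points: the map $h$ sends a periodic point $x$ of $g\circ h$ to a periodic point $h(x)$ of $h\circ g$ of the same period, it is injective there (if $h(x)=h(x')$ then $(g\circ h)(x)=(g\circ h)(x')$, and $g\circ h$ is a bijection on its periodic set), and since $h\circ(g\circ h)=(h\circ g)\circ h$ it intertwines the two maps and carries the $(g\circ h)$-orbit of $x$ onto the $(h\circ g)$-orbit of $h(x)$. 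With $g$ playing the symmetric role, $h$ is an isomorphism of the two periodic-orbit digraphs, establishing the first statement.

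For the reflection I would use the same periodic-orbit viewpoint, but the binary hypothesis $K=\F_2$ is essential and this is where the real work lies. The starting observation is that $[\Fy,\rev(\pi)]=F_{\pi_1}\circ\cdots\circ F_{\pi_n}$ applies the local maps in exactly the reverse vertex order. On the periodic set of $\phi:=[\Fy,\pi]$ each intermediate update is forced to be a bijection: if $P_0$ is the set of periodic points and $P_j=F_{\pi_j}(P_{j-1})$, then $\phi|_{P_0}$ is a bijection onto $P_0=P_n$, and since each $F_{\pi_j}\colon P_{j-1}\to P_j$ is surjective this chain of cardinalities forces every $F_{\pi_j}\colon P_{j-1}\to P_j$ to be a bijection; consequently the inverse dynamics $\phi^{-1}$ on $P_0$ applies the maps in the reversed vertex order using the \emph{inverse} local bijections. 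Since a directed cycle is isomorphic to its reverse (via $i\mapsto -i$), $\phi$ is cycle equivalent to $\phi^{-1}$, so it suffices to show that $[\Fy,\rev(\pi)]$ is cycle equivalent to $\phi^{-1}$.

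Thus the crux reduces to comparing two dynamical systems that use the \emph{same} reversed order of vertices but differ by replacing each forward update with its inverse on the relevant periodic set. The hard part --- and the only place the theorem can fail for larger alphabets --- is to build an explicit orbit isomorphism between $[\Fy,\rev(\pi)]$ and $\phi^{-1}$ using the special structure of $\F_2$-valued local functions; I would construct this isomorphism at the level of the full periodic trajectory of intermediate states, matching the reversed trajectory of $\phi$ with a trajectory of $[\Fy,\rev(\pi)]$ and verifying that binarity makes the two step-by-step transitions correspond. Once this orbit isomorphism is in hand, cycle equivalence of $[\Fy,\pi]$ and $[\Fy,\rev(\pi)]$ follows by composing with the cycle equivalence $\phi\sim\phi^{-1}$, completing the proof.
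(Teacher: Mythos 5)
The paper itself does not prove Theorem~\ref{thm:shift}; it defers entirely to the companion paper \cite{Macauley:08d}, so there is no in-text argument to compare yours against. Your treatment of the shift statement is correct and complete: reducing to $s=1$ and invoking the standard fact that $g\circ h$ and $h\circ g$ are conjugate on their sets of periodic points (via $h$, using the intertwining relation $h\circ(g\circ h)=(h\circ g)\circ h$ together with injectivity of $h$ on the periodic set of $g\circ h$) is exactly the right argument, and it works for any finite $K$.

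The reflection half, however, has a genuine gap. You correctly set up the chain $P_0,\dots,P_n$ of images of the periodic set, observe that each $F_{\pi_j}\colon P_{j-1}\to P_j$ is forced to be a bijection by a cardinality count, note that $\phi^{-1}$ applies the inverse local bijections in the reversed vertex order, and reduce the problem to comparing $[\Fy,\rev(\pi)]$ with $\phi^{-1}$. But at the one point where the hypothesis $K=\F_2$ must enter, you write only that you \emph{would construct} the required orbit isomorphism by ``verifying that binarity makes the two step-by-step transitions correspond.'' That verification is the entire content of the second statement, and it is not routine. The missing idea is that a $Y$-local map over $\F_2$ alters only one coordinate, so on each fiber obtained by fixing the remaining coordinates it is either the identity or the flip of a single bit; where it is injective it is therefore an involution, so $F_{\pi_j}$ itself realizes the inverse bijection $P_j\to P_{j-1}$ and $\phi^{-1}$ coincides with $[\Fy,\rev(\pi)]$ on $P_0$. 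Even this needs care: injectivity of $F_{\pi_j}$ on $P_{j-1}$ alone does not immediately exclude the degenerate case where $x\in P_{j-1}$ is sent to $y\notin P_{j-1}$ with $F_{\pi_j}(y)=y$, so one must rule that out or organize the argument to avoid it. Finally, having identified $\phi^{-1}$ with $[\Fy,\rev(\pi)]$ on $P_0$, you still need to show that $P_0$ is exactly the periodic set of $[\Fy,\rev(\pi)]$, which follows from running the same argument with $\rev(\pi)$ in place of $\pi$ but is nowhere stated. As written, the second half of your proof is a plan rather than a proof.
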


We refer to~\cite{Macauley:08d} for the proof of this result. Two
sequential dynamical systems are cycle equivalent if their phase space
digraphs are isomorphic when restricted to the cycles. The
paper~\cite{Macauley:08d} contains additional background on
equivalences of sequential dynamical systems as well as applications
of $\kappa$-equivalence to the structural properties of their phase
spaces.

\medskip

There is also a connection between $\kappa$-equivalence and the
theory of Coxeter groups. There is a natural bijection between the
set of Coxeter elements $\prod_i s_{\pi(i)}$ of a Coxeter group (see,
\eg~\cite{Bjorner:05}) with generators $s_i$ for $1\le i\le n$ and
Coxeter graph $Y$ (ignoring order labels), and the set of equivalence
classes $[\pi]_Y$. This is clear since the commuting generators are
precisely those that are not connected in $Y$. Let $C(W,S)$ denote the
set of Coxeter elements of a Coxeter group $W$ with generating set
$S=\{s_1,\dots,s_n\}$. It is shown in~\cite{Shi:97a} that there is a
bijection
\begin{equation}
  \label{eq:bij2}
  C(W,S) \longrightarrow \Acyc(Y) \;.
\end{equation}
Moreover, conjugating a Coxeter element $\prod s_{\pi(i)}$ by
$s_{\pi(1)}$ corresponds to a cyclic shift, \ie,
\begin{equation}
  s_{\pi(1)} (s_{\pi(1)} s_{\pi(2)} \cdots s_{\pi(n)}) s_{\pi(1)}
  = s_{\pi(2)} \cdots s_{\pi(n)} s_{\pi(1)}\;,
\end{equation}
since each generator $s_i$ is an involution.  Thus $\kappa(Y)$ is an
upper bound for the number of conjugacy classes of Coxeter elements of
a Coxeter group that has (unlabeled) Coxeter graph $Y$. This bound is
known to be sharp in certain cases~\cite{Shi:01}, but sharpness in the
general case is still an open question.

Click-sequences are closely related to $c$-admissible sequences of a
Coxeter element $c$. Recently, the structure of these sequences was
studied and used to prove that a power of a Coxeter element of an
infinite group is reduced~\cite{Speyer:07}.

\medskip

The \textit{chip-firing} game was introduced by Bj\"{o}rner,
Lov\'{a}sz, and Shor~\cite{Bjorner:91}. It is played over an
undirected graph $Y$, and each vertex is given some number (possibly
zero) of chips. If vertex $i$ has degree $d_i$, and at least $d_i$
chips, then a legal move (or a ``click'') of vertex $i$ is a
transfer of one chip to each neighboring vertex. This may be viewed as
a generalization of a source-to-sink move for acyclic orientations
where the out-degree of a vertex plays the role of the chip count. The
chip-firing game is closely related to the \textit{numbers
  game}~\cite{Bjorner:05}. In the numbers game over a graph $Y$, the
legal sequences of moves are in 1--1 correspondence with the reduced
words of the Coxeter group with Coxeter graph $Y$. For an excellent
summary and comparison of these games, see~\cite{KEriksson:94}.

\medskip

A quiver is a finite directed graph (loops and multiple edges are
allowed), and appears primarily in the study of representation
theory. A quiver $Q$ with a field $K$ gives rise to a path algebra
$KQ$, and there is a natural correspondence between $KQ$-modules, and
$K$-representations of $Q$. In fact, there is an equivalence between
the categories of quiver representations, and modules over path
algebras. A path algebra is finite-dimensional if and only if the
quiver is acyclic, and the modules over finite-dimensional path
algebras form a reflective subcategory. A \textit{reflection functor}
maps representations of a quiver $Q$ to representations of a quiver
$Q'$, where $Q'$ differs from $Q$ by a source-to-sink
operation~\cite{Marsh:03}. We note that while the composition of $n$
source-to-sink operations (one for each vertex) maps a quiver back to
itself, the corresponding composition of reflection functors is not
the identity, but rather a \textit{Coxeter functor}. In fact, the same
result in~\cite{Speyer:07} about powers of Coxeter elements being
reduced was proven previously using techniques from the representation
theory of quivers~\cite{Kleiner:07}.

\medskip

We conclude with a remark on the evaluation of the functions
$\alpha(Y)$ and $\kappa(Y)$. They both satisfy recurrences under edge
deletion and contraction, and may be computed through evaluations of
the Tutte polynomial $T_Y$ of $Y$. It is well-known that
$\alpha(Y)=T_Y(2,0)$, and in~\cite{Macauley:08b}, we showed that
$\kappa(Y)=T_Y(1,0)$. Other quantities counted by $T_Y(1,0)$ include
the number of acyclic orientations of $Y$ with a unique sink at a
fixed vertex~\cite{Gioan:07}, and the M\"obius invariant of the
intersection lattice of the graphic hyperplane arrangement of
$Y$~\cite{Novik:02}. Some of the results in this paper have a natural
interpretation in the language of the Tutte polynomial. For example,
Corollary~\ref{cor:bipartite} tells us that a connected graph $Y$ is
bipartite if and only if $T_Y(1,0)$ is odd. Proposition~\ref{prop:1/n}
implies that $n\cdot T_Y(1,0)\leq T_Y(2,0)$. We also point out a
previous study of the Tutte polynomial in the context of the
chip-firing game~\cite{Lopez:97}. We hope this paper will motivate
further explorations of the connections between these topics, as also
provide insights useful to some of the open problems in Coxeter
theory, in particular the sharpness of the bound $\kappa(Y)$ for the
enumeration of conjugacy classes of Coxeter elements.


\begin{acknowledgement}
  Both authors are grateful to the NDSSL group at Virginia Tech for
  the support of this research. Special thanks to Ed Green and Ken
  Millett for helpful discussions and feedback, and to William
  Y.~C. Chen for valuable advise regarding the preparation and
  structuring of this paper. The work was partially supported by
  Fields Institute in Toronto, Canada.
\end{acknowledgement}



\begin{thebibliography}{1}

\bibitem{Bjorner:05}
A.~Bj{\"o}rner and F.~Brenti.
\newblock {\em Combinatorics of {C}oxeter Groups}.
\newblock Springer-Verlag, New York, 2005.

\bibitem{Bjorner:91}
A.~Bj\"{o}rner, L.~Lov\'{a}sz, and P.~Shor.
\newblock Chip-firing games on graphs.
\newblock {\em European Journal of Combinatorics}, 12:283--291, 1991.

\bibitem{Cartier:69}
P.~Cartier and D.~Foata.
\newblock {\em Problemes combinatoires de commutation et re\'arrangements},
  volume~85 of {\em Lecture Notes in Mathematics}.
\newblock Springer Verlag, 1969.

\bibitem{KEriksson:94}
K.~Eriksson.
\newblock Node firing games on graphs.
\newblock {\em Contemporary Mathematics}, 178:117--127, 1994.

\bibitem{Gioan:07}
E.~Gioan.
\newblock Enumerating degree sequences in digraphs and a cycle-cocycle
  reversing system.
\newblock {\em European Journal of Combinatorics}, 28:1351--1366, May 2007.

\bibitem{Kleiner:07}
M.~Kleiner and A.~Pelley.
\newblock Admissible sequences, preprojective representations of quivers, and
  reduced words in the {W}eil group of a {K}ac-{M}oody algebra.
\newblock {\em International Mathematical Research Notices}, 2007, May 2007.

\bibitem{Lopez:97}
C.~M. L\'{o}pez.
\newblock Chip firing and the {T}utte polynomial.
\newblock {\em Annals of Combinatorics}, 1:253--259, 1997.

\bibitem{Macauley:08d}
M.~Macauley and H.~S. Mortveit.
\newblock Cycle equivalence of graph dynamical systems.
\newblock Submitted. arXiv:0802.4412.

\bibitem{Macauley:08b}
M.~Macauley and H.~S. Mortveit.
\newblock On enumeration of conjugacy classes of {C}oxeter elements.
\newblock {\em Proceedings of the American Mathematical Society}, 2008.
\newblock Accepted. arXiv:0711.1140.

\bibitem{Marsh:03}
R.~Marsh, M.~Reineke, and A.~Zelevinsky.
\newblock Generalized associahedra via quiver representations.
\newblock {\em Transactions of the American Mathematical Society},
  355:4171--4186, 2003.

\bibitem{Mortveit:01a}
H.~S. Mortveit and C.~M. Reidys.
\newblock Discrete, sequential dynamical systems.
\newblock {\em Discrete Mathematics}, 226:281--295, 2001.

\bibitem{Novik:02}
I.~Novik, A.~Postnikov, and B.~Sturmfels.
\newblock Syzygies of oriented matroids.
\newblock {\em Duke Mathematical Journal}, 111:287--317, 2002.

\bibitem{Reidys:98a}
C.~M. Reidys.
\newblock Acyclic orientations of random graphs.
\newblock {\em Advances in Applied Mathematics}, 21(2):181--192, 1998.

\bibitem{Shi:97a}
J.-Y. Shi.
\newblock The enumeration of {C}oxeter elements.
\newblock {\em Journal of Algebraic Combinatorics}, 6:161--171, 1997.

\bibitem{Shi:01}
J.-Y. Shi.
\newblock Conjugacy relation on {C}oxeter elements.
\newblock {\em Advances in Mathematics}, 161:1--19, 2001.

\bibitem{Speyer:07}
D.~E. Speyer.
\newblock Powers of {C}oxeter elements in infinite groups are reduced.
\newblock Preprint. October 2007. arXiv:0710.3188.

\end{thebibliography}
\end{document}